\newtheorem{theorem}{Theorem}
\newtheorem{proposition}{Proposition}
\newtheorem{corollary}{Corollary}
\newtheorem{definition}{Definition}
\newtheorem{remark}{Remark}
\newcommand\norm[1]{\left\lVert#1\right\rVert}
\newcommand\abs[1]{\lvert#1\rvert}
\title{\LARGE \bf
Coupling Induced Stabilization of Network Dynamical Systems and Switching
}
\author{Moise R. Mouyebe and Anthony M. Bloch
\thanks{Department of Mathematics, University of Michigan, Ann Arbor, MI 48109, USA
{\tt\small \{mmouyebe,abloch\}@umich.edu} Suppored in part 
by NSF grant DMS-2103026, and AFOSR grants FA 9550-24-1-0215 and 9550-23-1-0400 (MURI)}
}
\begin{document}

\maketitle
\thispagestyle{empty}
\pagestyle{empty}

\begin{abstract}

This paper investigates the stability and stabilization of diffusively coupled network dynamical systems. We leverage Lyapunov methods to analyze the role of coupling in stabilizing or destabilizing network systems.  We derive critical coupling parameter values for stability and provide sufficient conditions for asymptotic stability under arbitrary switching scenarios, thus highlighting  the impact of both coupling strength and network topology on the stability analysis of such systems. Our theoretical results are supported by numerical simulations.
\end{abstract}

\begin{keywords}
    network dynamical systems, coupling, stability, switched network systems.
\end{keywords}

\section{INTRODUCTION}

Network dynamical systems (NDS) are an essential modeling framework in various fields such as communication networks \cite{baggio2020efficient}, biological systems \cite{selimkhanov2014accurate}, and power grids \cite{galli2011grid}. These systems consist of interconnected dynamical nodes, where the state of each node is influenced by that of its neighbors. Stability in NDS is crucial, as instabilities such as oscillations or chaos can negatively impact their performance. However, achieving stability is particularly challenging due to the nonlinear and complex interactions between nodes. To address this challenge, coupling strategies have emerged as an  effective solution \cite{padmanaban2015coupling}, \cite{hens2022stabilization}. Adequate coupling between nodes can promote synchronization and emergent behaviors \cite{dong2021briefing}, \cite{ocampo2024strong}, thereby enhancing the overall network stability, even when individual node dynamics are unstable. On the contrary, ill-designed coupling can have the opposite effect, leading to instability in otherwise stable networks. This is somewhat  reminiscent of the potentially destabilizing effect of dissipation on Hamiltonian systems with symmetry \cite{bloch1994dissipation}. 

The concept of nonlinear stability in dynamical systems was initially defined by Lyapunov.  Sontag’s work \cite{Sontag1996} on input-to-state stability helped lay the foundation for understanding how external inputs affect the stability of interconnected systems. In such systems, the couplings between subsystems can either be positive or negative, depending on the nature of the connections. Stabilizing NDS requires a clear understanding of both the local dynamical behavior of individual nodes as well as their mutual interactions, and careful analysis and design of the coupling between nodes is essential \cite{pecora1998master}, \cite{jalili2013enhancing}. Coupling-induced stabilization occurs when appropriate couplings drive the system to a stable equilibrium as seen in multi-agent consensus in coupled oscillators \cite{OlfatiSaber2007}, \cite{Pikovsky2001}.
Coupling mechanisms in NDS can be classified into local coupling and global coupling. Local coupling refers to interactions between neighboring nodes, whereas global coupling influences the entire network. The stability of NDS depends on the network’s structural characteristics, and identifying the optimal coupling structure and strength is a significant challenge. Recent research has focused on distributed optimization techniques \cite{Li2018} for stabilizing networked systems. Meanwhile \cite{ocampo2024strong} showed that an appropriately designed coupling term could induce synchronization and enhance overall system stability. In engineering applications, such as power grids, specific coupling strategies have been used to synchronize generator units, illustrating the importance of coupling induced stabilization \cite{dong2021briefing}.

In real-world networks, interconnections between components often change over time. This introduces the notion of switching in NDS, which refers to the dynamic alteration of the coupling structure due to external influences or intrinsic system demands. For instance, in communication networks, the network topology may change as connections are established or severed, meanwhile  in power grids, operational requirements can alter the strength of coupling between nodes. These changes can destabilize the network, especially if the connectivity changes abruptly or unpredictably \cite{yang2015network}, \cite{moreau2005stability}. Therefore, it is essential to develop theoretical results that ensure the stability of network systems under arbitrary switching scenarios, thereby enhancing the network's resilience to disturbances.

In this paper, we harness Lyapunov methods to analyze the impact of coupling on the stability and stabilization of diffusively coupled dynamical systems. We derive expressions for critical coupling parameter values that determine the stability of the network system. Furthermore, we derive sufficient conditions to guarantee asymptotic stability of the system under arbitrary switching, and numerical simulations are used to validate the theoretical results.

The paper is structured as follows: Section 2 provides the necessary background material, Section 3 presents the mathematical framework for coupling-induced stabilization, Section 4 discusses switched NDS and their stability under arbitrary switching, and Section 5 presents numerical results. Finally, the paper concludes with potential directions for future research.


\section{Preliminaries}
 \subsection{Dynamical systems on Graphs}
 
 In this work, we restrict ourselves to the class of simple  and connected graphs both directed and undirected. Simple here refers to the graphs that do not have more than one edge between any two vertices and no edge starts and ends at the same vertex (i.e no self-loops). Furthermore, we'll assume that all the agents have the same dynamics and interact in the same way with one another.
 
 \paragraph{The Framework}
 
 Let $\mathbf{A}$ be the adjacency matrix of a graph of $N$ interacting dynamical units (agents) each with isolated (node) dynamics $\dot{x}_i = f(x_i) \quad  \forall i\in \{1,\cdots,N\}$, where $x_i \in \mathbb{R}^d$ is the state vector of agent $i$. Furthermore, we assume that the pairwise interaction between agents $i\ne j$ is captured by the same coupling function $\phi: \mathbb{R}^d \times  \mathbb{R}^d \mapsto  \mathbb{R}^d$. The resulting network system dynamics can be represented by the following system of ODEs:
 
 \begin{equation}\label{eq:networkDynamics}
    \dot{x}_i = f(x_i) +\sum_{j}^{N}\mathbf{A}_{ij}\phi(x_i,x_j) , \quad 1\le i \le N
\end{equation}

\paragraph{The Terminology}

We refer to the matrix $\mathbf{A}$ as the network  \emph{topology} or \emph{architecture} or  \emph{wiring}.  Meanwhile the mapping $\phi$ is referred to as the \emph{coupling}. 

In this paper, we are particularly interested in \emph{linear couplings}. These are coupling of the form 

\begin{equation}\label{eq:linear coupling}
    \phi(u,v) = M(\alpha u +\beta v)
\end{equation}

where $\alpha$ and $\beta$ are real numbers and $M$ is the \emph{coupling channel matrix} which in general is chosen to be a diagonal matrix with boolean entries. 

Let $\mathbf{x} = [x_1,\cdots,x_N]^T \in \mathbb{R}^{Nd}$ be the joint state vector, namely the state vector of the network system. If we define $\textbf{f}(\mathbf{x}) = [f(x_1),\cdots,f(x_N)]^T$, then (\ref{eq:networkDynamics}) can be compactly written using Kronecker product as follows: 
\begin{align}\label{eq:compactNwkDynamics}
    \dot{\mathbf{x}} &= \textbf{f}(\mathbf{x}) + \left(L_{\alpha\beta}\otimes M\right )\mathbf{x}
\end{align}

where 

\begin{align}\label{eq:couplingLaplacian}
    L_{\alpha\beta} &= \alpha D^{in} + \beta \mathbf{A}
\end{align}

$D^{in} = diag(\mathbf{A}\mathbf{1})$ is the \emph{in-degree} matrix of $\mathbf{A}$. We call the matrix $L_{\alpha\beta}$ the \emph{parametrized coupling Laplacian} or simply the \emph{coupling Laplacian} with \emph{coupling parameters} $\alpha$ and $\beta$. 

 The coupling Laplacian encodes information about the network topology and the linear coupling in a one-to-one fashion. In other words, a given coupling Laplacian will instantiate the linear coupling (up to a coupling channel matrix $M$, which without loss of generality can be  the identity matrix) as well as the network topology and vice-versa.

\paragraph{Equilibria of the Dynamics}

\begin{proposition}\label{prop:equilibria}
Let $x_0$ be an equilibrium state for the isolated dynamics $f$. 
\begin{enumerate}
    \item If  $\alpha = -\beta$, then the vector $\mathbf{1} \otimes x_0$ is an equilibrium for the network  dynamics (\ref{eq:compactNwkDynamics}) .
    \item If $x_0 = 0$, then the origin is an equilibrium for the network dynamics (\ref{eq:compactNwkDynamics}) for any $\alpha,\beta \in \mathbb{R}$.
\end{enumerate}
\end{proposition}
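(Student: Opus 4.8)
The plan is to verify directly that each candidate state annihilates the right-hand side of the compact dynamics (\ref{eq:compactNwkDynamics}); that is, a state $\mathbf{x}^\star$ is an equilibrium precisely when $\mathbf{f}(\mathbf{x}^\star) + (L_{\alpha\beta}\otimes M)\mathbf{x}^\star = 0$. In both parts I would treat the node-dynamics contribution $\mathbf{f}(\mathbf{x}^\star)$ and the coupling contribution $(L_{\alpha\beta}\otimes M)\mathbf{x}^\star$ separately, and show each vanishes.

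For the first part I would take $\mathbf{x}^\star = \mathbf{1}\otimes x_0$. The node term factors as $\mathbf{f}(\mathbf{1}\otimes x_0) = \mathbf{1}\otimes f(x_0) = 0$, since $f(x_0)=0$ by hypothesis. For the coupling term I would invoke the mixed-product property of the Kronecker product, $(L_{\alpha\beta}\otimes M)(\mathbf{1}\otimes x_0) = (L_{\alpha\beta}\mathbf{1})\otimes(M x_0)$, which reduces everything to the row sums of the coupling Laplacian.

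The crux of the argument, and really the only step requiring any care, is the computation of $L_{\alpha\beta}\mathbf{1}$. Here I would use that $D^{in}=\mathrm{diag}(\mathbf{A}\mathbf{1})$ satisfies $D^{in}\mathbf{1}=\mathbf{A}\mathbf{1}$, so that $L_{\alpha\beta}\mathbf{1} = \alpha D^{in}\mathbf{1} + \beta\mathbf{A}\mathbf{1} = (\alpha+\beta)\,\mathbf{A}\mathbf{1}$. Under the hypothesis $\alpha=-\beta$ this vanishes identically, whence $(L_{\alpha\beta}\mathbf{1})\otimes(Mx_0)=0$ regardless of $M$ or $x_0$, and the coupling term drops out. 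Combining the two vanishing contributions yields $\dot{\mathbf{x}}=0$, establishing that $\mathbf{1}\otimes x_0$ is an equilibrium. The identity $L_{\alpha\beta}\mathbf{1}=(\alpha+\beta)\mathbf{A}\mathbf{1}$ is the natural generalization of the zero-row-sum property of the ordinary graph Laplacian, and pinning it down is the heart of the matter.

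The second part is immediate once the framework is in place: taking $\mathbf{x}^\star = 0$, the coupling term $(L_{\alpha\beta}\otimes M)\,0 = 0$ vanishes for any $\alpha,\beta$ by linearity, while $\mathbf{f}(0)=\mathbf{1}\otimes f(0)=0$ since $x_0=0$ is assumed to be an equilibrium of $f$. Hence the origin is an equilibrium for all coupling parameters, with no constraint relating $\alpha$ and $\beta$, which is exactly what distinguishes it from the first case.
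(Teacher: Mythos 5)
Your proof is correct and follows essentially the same route as the paper: the paper's argument rests on exactly the zero-row-sum property $L_{\alpha\beta}\mathbf{1}=0$ when $\alpha=-\beta$ (for part 1) and linearity (for part 2), which you have simply spelled out in full, including the Kronecker mixed-product step $(L_{\alpha\beta}\otimes M)(\mathbf{1}\otimes x_0)=(L_{\alpha\beta}\mathbf{1})\otimes(Mx_0)$ that the paper leaves implicit.
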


\begin{proof}
    The first assertion follows from the fact that  if $\alpha = -\beta$, then the coupling Laplacian has the \emph{zero-row-sum} property, namely $L_{\alpha\beta}\mathbf{1} = 0.$ The second assertion holds by linearity of the operator $M$.
\end{proof}

\subsection{The Linearized Dynamics}
\paragraph{Linearization near a Reference Trajectory}

\begin{proposition}\label{prop:variationalEqn}
    Suppose we're given an \emph{arbitrary} reference trajectory $\bar{\mathbf{x}}(t)$ of (\ref{eq:networkDynamics}), the variational equation describing the dynamics of infinitesimal perturbations is given by the following linear time-varying equation
\begin{multline}\label{eq:variationalEqn}
    \dot{\mathbf{\delta x}} = \left( \sum_{i=1}^N{\mathbf{E}_{ii}\otimes \frac{\partial f}{\partial x}} \left(\bar{\mathbf{x}}\right) + \sum_{i,j=1}^{N}{ \left\{ \mathbf{E}_{ii}\otimes \frac{\partial \phi}{\partial u }\left(\bar{\mathbf{x}},\bar{\mathbf{x}}\right) +  \right. }    \right. \\ 
    \left.\left. \mathbf{E}_{ij}\otimes \frac{\partial \phi}{\partial v}\left(\bar{\mathbf{x}},\bar{\mathbf{x}}\right) \right\}\right)\mathbf{\delta x}
\end{multline}
\end{proposition}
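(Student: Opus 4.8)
The plan is to obtain (\ref{eq:variationalEqn}) by a componentwise first-order Taylor expansion of the right-hand side of (\ref{eq:networkDynamics}) about the reference trajectory, followed by reassembly of the resulting block structure into Kronecker form. First I would write $x_i(t) = \bar{x}_i(t) + \delta x_i(t)$ for each node and substitute into (\ref{eq:networkDynamics}); since $\bar{\mathbf{x}}(t)$ is by hypothesis a solution, the zeroth-order terms cancel, and retaining only first-order contributions gives
\begin{equation*}
\dot{\delta x}_i = \frac{\partial f}{\partial x}(\bar{x}_i)\,\delta x_i + \sum_{j=1}^N \mathbf{A}_{ij}\left[\frac{\partial \phi}{\partial u}(\bar{x}_i,\bar{x}_j)\,\delta x_i + \frac{\partial \phi}{\partial v}(\bar{x}_i,\bar{x}_j)\,\delta x_j\right].
\end{equation*}
The key structural observation comes from the chain rule applied to $x \mapsto \phi(x_i,x_j)$: the derivative in the first slot $u$ multiplies the node's own perturbation $\delta x_i$, while the derivative in the second slot $v$ multiplies the neighbor's perturbation $\delta x_j$.

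The next step is to repackage this per-node identity as a single linear operator acting on the joint perturbation $\delta\mathbf{x} = [\delta x_1,\dots,\delta x_N]^T$, using the matrix units $\mathbf{E}_{ij}$. The $\partial f/\partial x$ contributions are diagonal in the node index and assemble as $\sum_i \mathbf{E}_{ii}\otimes \frac{\partial f}{\partial x}(\bar{x}_i)$; the $\partial_u\phi$ terms, which also act only on $\delta x_i$, attach to $\mathbf{E}_{ii}$; and the $\partial_v\phi$ terms, carrying the index pair $(i,j)$, attach to $\mathbf{E}_{ij}$. Collecting the coupling contributions under one double sum over $(i,j)$ — with the adjacency weights $\mathbf{A}_{ij}$ carried through the sum, so that for fixed $i$ only the neighbors $j$ contribute and each partial derivative is evaluated at the pair $(\bar{x}_i,\bar{x}_j)$ — reproduces exactly the operator in (\ref{eq:variationalEqn}).

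The Taylor expansion itself is routine; I expect the main obstacle to be the index bookkeeping in the reassembly, namely keeping each of the two partial derivatives of $\phi$ attached to the correct matrix unit ($\mathbf{E}_{ii}$ for $\partial_u$, $\mathbf{E}_{ij}$ for $\partial_v$) and carrying the adjacency weights $\mathbf{A}_{ij}$ consistently through the double sum, since an index slip here would yield the wrong block pattern. Finally I would note that, because $\bar{\mathbf{x}}(t)$ is an arbitrary trajectory rather than an equilibrium, all Jacobian blocks depend on $t$ through $\bar{\mathbf{x}}(t)$, so (\ref{eq:variationalEqn}) is genuinely a linear time-varying equation, as asserted.
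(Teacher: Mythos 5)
Your proposal is correct and follows essentially the same route as the paper's own proof: a componentwise Taylor expansion about $\bar{\mathbf{x}}$, cancellation of the zeroth-order terms because $\bar{\mathbf{x}}$ solves (\ref{eq:networkDynamics}), truncation of higher-order remainders, and reassembly of the per-node Jacobian blocks into Kronecker form via the matrix units $\mathbf{E}_{ii}$ and $\mathbf{E}_{ij}$. Your explicit remark that the adjacency weights $\mathbf{A}_{ij}$ must be carried through the double sum (and that $\partial_u\phi$ attaches to $\mathbf{E}_{ii}$ while $\partial_v\phi$ attaches to $\mathbf{E}_{ij}$) is, if anything, slightly more careful bookkeeping than the paper's compact statement.
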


\begin{proof}
    Let $\mathbf{x}$ be a $\mathbf{\delta x}$ perturbed trajectory away from $\bar{\mathbf{x}}$.  Denote by $\{\mathbf{E}_{ij}\}_{1\le i,j \le N}$ the standard basis of the space Mat$_N(\mathbb{R})$ of square matrices with coefficients in the field $\mathbb{R}$. A Taylor expansion yields the following
    \begin{align}\label{TaylorExpanLinDynRefTraj}
        \begin{split}
            \dot{\bar{x}}_i + \dot{\delta x_i} &= f(\bar{x}_i + \delta x_i) + \sum_{j=1}^{N}{\mathbf{A}_{ij}\phi(\bar{x}_i+\delta x_i,\bar{x}_j+\delta x_j)},~ 1\le i \le N \\
            &= f(\bar{x}_i)+\frac{\partial f}{\partial x} (\bar{x}_i)\delta x_i + f_{2+}(\delta x_i) +  \sum_{j=1}^{N}{ \mathbf{A}_{ij}}\left\{ \phi(\bar{x}_i,\bar{x}_j)+ \right.\\
           &\left. \frac{\partial \phi}{\partial u}(\bar{x}_i,\bar{x}_j)\delta x_i + \frac{\partial \phi}{\partial v}(\bar{x}_i,\bar{x}_j)\delta x_j+ \phi_{2+}(\delta x_i,\delta x_j) \right\}
        \end{split}
    \end{align}
    Noting that $\bar{\mathbf{x}}$ is a solution of (\ref{eq:networkDynamics}) and ignoring the higher order terms, we recover (\ref{eq:variationalEqn}) as the compact version of (\ref{TaylorExpanLinDynRefTraj}).
\end{proof}


\begin{definition}
    The synchronization manifold of the system (\ref{eq:networkDynamics}) is the submanifold of the joint state space along which the states of the isolated dynamics coincide. More specifically we have
    \begin{align}\label{def:SyncManifold}
        \mathcal{S} \coloneq \left\{ \mathbf{x}=(x_1,\cdots,x_N) \in \mathbb{R}^{Nd} ~\rvert~ x_i = x_j, ~\forall~i,j = 1,\cdots, N  \right\}
    \end{align}
\end{definition}

\begin{corollary}
    If the reference trajectory $\bar{\mathbf{x}}$ in Proposition \ref{prop:variationalEqn} is constrained to the synchronization manifold $\mathcal{S}$, then (\ref{eq:variationalEqn}) simplifies to 

    \begin{align}\label{eq:varEqn_on_SyncManifold}
    \dot{\mathbf{\delta x}} = \left (  I_{N}\otimes \frac{\partial f}{\partial x}  \left(\bar{x}\right)  +  D^{in}\otimes \frac{\partial \phi}{\partial u}\left(\bar{x},\bar{x}\right) +  \mathbf{A}\otimes \frac{\partial \phi}{\partial v}\left(\bar{x},\bar{x}\right)     \right )\mathbf{\delta x} 
\end{align}

In particular, if the coupling $\phi$ is also constrained to be linear as in (\ref{eq:linear coupling}), then  system (\ref{eq:varEqn_on_SyncManifold}) further simplifies to 

\begin{align}\label{eq:varEqn_on_SyncManifold_with_LinCoupling}
    \dot{\mathbf{\delta x}} = \left (  I_{N}\otimes \frac{\partial f}{\partial x}  \left(\bar{x}\right)  +  L_{\alpha\beta}\otimes M
        \right )\mathbf{\delta x} 
\end{align}
\end{corollary}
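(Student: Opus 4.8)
The plan is to treat the Corollary as a direct specialization of the variational equation (\ref{eq:variationalEqn}): once the reference trajectory is placed on $\mathcal{S}$, no genuinely new argument is needed, only a careful collapse of the node-indexed Jacobians. First I would return to the per-node linearization underlying (\ref{eq:variationalEqn}) --- the form in which node $i$ carries the Jacobian $\frac{\partial f}{\partial x}(\bar{x}_i)$ and each neighbor $j$ contributes $\mathbf{A}_{ij}\left[\frac{\partial \phi}{\partial u}(\bar{x}_i,\bar{x}_j)\,\delta x_i + \frac{\partial \phi}{\partial v}(\bar{x}_i,\bar{x}_j)\,\delta x_j\right]$, obtained directly from (\ref{TaylorExpanLinDynRefTraj}) --- and impose the defining constraint of $\mathcal{S}$, namely $\bar{x}_i = \bar{x}$ for all $i$. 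Under this constraint every per-node Jacobian collapses to the single common matrix $\frac{\partial f}{\partial x}(\bar{x})$, while the coupling partials $\frac{\partial \phi}{\partial u}(\bar{x},\bar{x})$ and $\frac{\partial \phi}{\partial v}(\bar{x},\bar{x})$ become constant in the indices $i,j$ and may be pulled out of the sums as fixed blocks.

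Next I would reassemble the stacked equation for $\mathbf{\delta x}$ and read off the three coefficient matrices through the standard-basis identities. The $f$-contribution yields $\left(\sum_i \mathbf{E}_{ii}\right)\otimes \frac{\partial f}{\partial x}(\bar{x}) = I_N \otimes \frac{\partial f}{\partial x}(\bar{x})$. The diagonal self-coupling terms accumulate the row sums of $\mathbf{A}$: since $\sum_j \mathbf{A}_{ij} = (\mathbf{A}\mathbf{1})_i$, one gets $\left(\sum_i (\mathbf{A}\mathbf{1})_i\,\mathbf{E}_{ii}\right)\otimes \frac{\partial \phi}{\partial u}(\bar{x},\bar{x}) = D^{in}\otimes \frac{\partial \phi}{\partial u}(\bar{x},\bar{x})$, which is precisely where the in-degree matrix enters. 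The cross terms assemble through $\sum_{i,j}\mathbf{A}_{ij}\mathbf{E}_{ij} = \mathbf{A}$ into $\mathbf{A}\otimes \frac{\partial \phi}{\partial v}(\bar{x},\bar{x})$. Summing the three pieces gives (\ref{eq:varEqn_on_SyncManifold}).

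For the linear-coupling refinement I would substitute $\phi(u,v) = M(\alpha u + \beta v)$ from (\ref{eq:linear coupling}), so that $\frac{\partial \phi}{\partial u}(\bar{x},\bar{x}) = \alpha M$ and $\frac{\partial \phi}{\partial v}(\bar{x},\bar{x}) = \beta M$, both independent of $\bar{x}$. Inserting these into (\ref{eq:varEqn_on_SyncManifold}) and using bilinearity of the Kronecker product to factor out $M$ gives
\[
D^{in}\otimes \alpha M + \mathbf{A}\otimes \beta M = \left(\alpha D^{in} + \beta \mathbf{A}\right)\otimes M = L_{\alpha\beta}\otimes M,
\]
by the definition (\ref{eq:couplingLaplacian}); combined with the unchanged $f$-term this recovers (\ref{eq:varEqn_on_SyncManifold_with_LinCoupling}).

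The computation is essentially bookkeeping, so I do not expect a deep obstacle; the one step demanding care is the collapse. I must verify that restricting $\bar{\mathbf{x}}$ to $\mathcal{S}$ genuinely makes all node Jacobians coincide and all coupling partials index-independent --- this is exactly the content of the constraint $x_i = x_j$ defining $\mathcal{S}$ in (\ref{def:SyncManifold}) --- and that the diagonal $u$-partial terms, after being summed over $j$ against $\mathbf{A}_{ij}$, reproduce $D^{in} = \mathrm{diag}(\mathbf{A}\mathbf{1})$ and not $\mathbf{A}$ itself. Keeping each weight $\mathbf{A}_{ij}$ attached to the correct basis matrix --- $\mathbf{E}_{ii}$ for the $u$-partial versus $\mathbf{E}_{ij}$ for the $v$-partial --- is the only place an error could plausibly enter.
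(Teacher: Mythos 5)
Your proof is correct and follows exactly the route the paper intends: the corollary is stated there without proof, as a direct specialization of Proposition \ref{prop:variationalEqn}, and your Kronecker bookkeeping --- collapsing all node Jacobians via $\bar{x}_i = \bar{x}$, assembling $D^{in}$ from the row sums $\sum_j \mathbf{A}_{ij}$ attached to $\mathbf{E}_{ii}$, keeping $\mathbf{A}_{ij}$ attached to $\mathbf{E}_{ij}$ for the cross terms, and factoring $\left(\alpha D^{in} + \beta\mathbf{A}\right)\otimes M = L_{\alpha\beta}\otimes M$ for linear coupling --- supplies precisely the omitted computation. You were also right to work from the weighted per-node expansion (\ref{TaylorExpanLinDynRefTraj}) rather than from the displayed form (\ref{eq:variationalEqn}), whose statement drops the $\mathbf{A}_{ij}$ factors; that is the correct reading, since $D^{in}$ and $\mathbf{A}$ can only emerge in (\ref{eq:varEqn_on_SyncManifold}) with those weights in place.
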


\begin{remark}
    Note that equations (\ref{eq:variationalEqn}), (\ref{eq:varEqn_on_SyncManifold}) and (\ref{eq:varEqn_on_SyncManifold_with_LinCoupling}) are fundamentally linear time-varying (LTV) systems. However, if the reference trajectory $\bar{\mathbf{x}}$ specializes to an equilibrium (see   Proposition \ref{prop:equilibria}), then these equations effectively become linear time-invariant (LTI) systems, which in general are more analytically tractable.
\end{remark}

\paragraph{The Spectrum of the Jacobian}

\begin{proposition}\label{prop:Jaobian_Spectrum}
    Let $\mathbf{x}_0 \in \mathcal{S}$ be an equilibrium  of the dynamics (\ref{eq:networkDynamics}), and assume that $M = I_d$. The spectrum $\sigma(\mathbf{J})$ of the Jacobian
    $\mathbf{J} = I_{N}\otimes \frac{\partial f}{\partial x}  \left(x_0\right)  +  L_{\alpha\beta}\otimes I_d$  of the system is given by:
    \begin{align}\label{eq:JacobianSpectrum_nwkDyn}
        \begin{split}
            \sigma(\mathbf{J}) = \sigma\left(\frac{\partial f}{\partial x}  \left(x_0\right)\right) + \sigma\left(L_{\alpha\beta}\right)
        \end{split}
    \end{align}
\end{proposition}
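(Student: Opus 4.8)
The plan is to recognize $\mathbf{J} = I_N \otimes F + L \otimes I_d$ (writing $F := \frac{\partial f}{\partial x}(x_0)$ and $L := L_{\alpha\beta}$ for brevity) as a \emph{Kronecker sum}, and to compute its spectrum by simultaneously triangularizing the two summands under a single unitary similarity. The essential tools are the mixed-product property of the Kronecker product, $(P \otimes Q)(R \otimes S) = (PR)\otimes(QS)$, together with the complex Schur decomposition, which is available because the eigenvalues live in $\mathbb{C}$ even though $F$ and $L$ are real.

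First I would take complex Schur factorizations $F = U_F T_F U_F^*$ and $L = U_L T_L U_L^*$, where $U_F, U_L$ are unitary and $T_F, T_L$ are upper triangular carrying the eigenvalues $\mu_1,\dots,\mu_d$ of $F$ and $\lambda_1,\dots,\lambda_N$ of $L$ on their diagonals (each repeated according to algebraic multiplicity). I would then form the unitary matrix $U := U_L \otimes U_F$ and apply the mixed-product property. Since the two similarity transformations decouple across the tensor factors, one obtains $U^*(I_N \otimes F)U = I_N \otimes T_F$ and $U^*(L \otimes I_d)U = T_L \otimes I_d$, hence $U^* \mathbf{J}\, U = I_N \otimes T_F + T_L \otimes I_d$.

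The main step is to observe that this matrix is upper triangular: $I_N \otimes T_F$ is block-diagonal with the upper-triangular block $T_F$ repeated $N$ times, while $T_L \otimes I_d$ is block upper-triangular with diagonal blocks $\lambda_k I_d$. Reading off the diagonal, the entries indexed by the $k$-th diagonal block are $\lambda_k + \mu_1,\dots,\lambda_k + \mu_d$, so as $k$ ranges over $1,\dots,N$ and $l$ over $1,\dots,d$ we recover exactly the sumset $\{\lambda_k + \mu_l\}$, which is $\sigma(L_{\alpha\beta}) + \sigma(F)$. Because $\mathbf{J}$ is unitarily similar to an upper-triangular matrix with these diagonal entries, its spectrum (as a multiset, hence also as a set) coincides with this collection, establishing (\ref{eq:JacobianSpectrum_nwkDyn}).

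I do not anticipate a genuine obstacle; the only point requiring care is exhaustiveness. A purely eigenvector-based argument would show each $\lambda + \mu$ is an eigenvalue but would be conclusive only when $F$ and $L$ are diagonalizable; the Schur route sidesteps this by delivering all eigenvalues with correct multiplicities directly from the triangular form. For intuition one may still note that if $Fv = \mu v$ and $Lw = \lambda w$, then the mixed-product property yields $\mathbf{J}(w \otimes v) = (\lambda + \mu)(w \otimes v)$, so each sum is manifestly an eigenvalue, and the triangularization merely upgrades this heuristic to the full spectral statement.
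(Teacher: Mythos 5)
Your proof is correct and follows exactly the route the paper indicates: the paper's own (one-line) proof simply invokes the Schur decomposition of $\frac{\partial f}{\partial x}(x_0)$ and $L_{\alpha\beta}$ with a pointer to Horn and Johnson, and your argument via $U = U_L \otimes U_F$, the mixed-product property, and reading the diagonal of the resulting upper-triangular matrix is precisely the worked-out version of that citation. Your closing remark on why the eigenvector argument alone would not be exhaustive (absent diagonalizability) is a correct and worthwhile clarification, but it does not constitute a different approach.
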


\begin{proof}
    The proof is fairly straightforward, and it relies on the  Schur decomposition of the matrices $\frac{\partial f}{\partial x}  \left(x_0\right)$ and $L_{\alpha\beta}$. See \cite{horn1994topics} for more general details.
\end{proof}

\section{Coupling Induced Stabilization of Network Dynamics}

\subsection{Stability Conditions}\label{sec:stabilityCond}

From now on, we assume that the equilibrium of the dynamics  is located at the origin. This can always be achieved using an appropriate change of coordinates. Furthermore, All stability notions discussed will be local unless otherwise indicated.

\begin{proposition}\label{prop:NecessaryCond_stability-NwkDyn}
    Assume the coupling Laplacian $L_{\alpha\beta}$ has an eigenvalue in the Right Half Plane (RHP).  The stability of the node dynamics is necessary for that of the network dynamics (\ref{eq:compactNwkDynamics}).
\end{proposition}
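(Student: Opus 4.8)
The plan is to reduce the question to a purely spectral one and then exploit the additive structure of the Jacobian spectrum furnished by Proposition~\ref{prop:Jaobian_Spectrum}. Since the equilibrium has been placed at the origin and only local asymptotic stability is at issue, I would invoke the principle of linearized stability (Lyapunov's indirect method): the network dynamics~(\ref{eq:compactNwkDynamics}) is stable precisely when its Jacobian $\mathbf{J} = I_N \otimes \frac{\partial f}{\partial x}(x_0) + L_{\alpha\beta}\otimes I_d$ is Hurwitz, and the node dynamics is stable precisely when $\frac{\partial f}{\partial x}(x_0)$ is Hurwitz. It therefore suffices to show that Hurwitzness of $\mathbf{J}$ forces Hurwitzness of $\frac{\partial f}{\partial x}(x_0)$, i.e. that network stability implies node stability.

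First I would apply Proposition~\ref{prop:Jaobian_Spectrum} (with $M = I_d$) to write every eigenvalue of $\mathbf{J}$ as a sum $\lambda + \mu$, where $\lambda \in \sigma\!\left(\frac{\partial f}{\partial x}(x_0)\right)$ and $\mu \in \sigma(L_{\alpha\beta})$. Assuming the network is stable, each such sum lies in the open left half plane, so $\mathrm{Re}(\lambda) + \mathrm{Re}(\mu) < 0$ for every admissible pair $(\lambda,\mu)$.

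Next I would use the hypothesis to single out an eigenvalue $\mu^{\ast} \in \sigma(L_{\alpha\beta})$ with $\mathrm{Re}(\mu^{\ast}) > 0$. Fixing this $\mu^{\ast}$ and letting $\lambda$ range over $\sigma\!\left(\frac{\partial f}{\partial x}(x_0)\right)$, the inequality above yields $\mathrm{Re}(\lambda) < -\mathrm{Re}(\mu^{\ast}) < 0$ for every node eigenvalue $\lambda$. Hence $\frac{\partial f}{\partial x}(x_0)$ is Hurwitz and the node dynamics is stable, establishing necessity. The RHP hypothesis enters exactly here: it is what makes the bound $-\mathrm{Re}(\mu^{\ast})$ strictly negative, so that stability of the network is transmitted down to the isolated node. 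Intuitively, the assumption rules out the stabilizing regime of Section~\ref{sec:stabilityCond}, since a strictly positive $\mathrm{Re}(\mu^{\ast})$ means the coupling cannot overcome an unstable mode along that eigendirection.

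The point requiring care, rather than a genuine obstacle, is the passage between nonlinear stability and the spectral (Hurwitz) condition; I would state this reliance on linearized stability explicitly, together with the standing assumption $M = I_d$ under which Proposition~\ref{prop:Jaobian_Spectrum} applies verbatim. Eigenvalues on the imaginary axis cause no difficulty, because the strict inequality $\mathrm{Re}(\mu^{\ast}) > 0$ already delivers a strict upper bound on each $\mathrm{Re}(\lambda)$.
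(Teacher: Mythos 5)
Your argument has the same spectral core as the paper's --- both rest on the additive description of $\sigma(\mathbf{J})$ from Proposition~\ref{prop:Jaobian_Spectrum} --- and proving the direct implication rather than the contrapositive is immaterial. The genuine problem is your opening reduction: ``the network dynamics is stable \emph{precisely when} its Jacobian $\mathbf{J}$ is Hurwitz'' is false as a biconditional, and the direction you actually need is the false one. Lyapunov's indirect method supplies only two one-sided implications: (i) Jacobian Hurwitz $\Rightarrow$ the equilibrium is (locally asymptotically) stable, and (ii) an eigenvalue of the Jacobian in the open RHP $\Rightarrow$ the equilibrium is unstable. The converse of (i), namely ``stable $\Rightarrow$ Hurwitz,'' fails for nonlinear systems; the paper's own node dynamics $\dot{x}=-x^3$ from the numerical section is globally asymptotically stable with linearization identically zero. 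Nor is the failure vacuous in the present setting: under the hypotheses of Proposition~\ref{prop:NecessaryCond_stability-NwkDyn} the matrix $\mathbf{J}$ can carry eigenvalues on the imaginary axis (e.g.\ $\nu=-1$ paired with an eigenvalue of $L_{\alpha\beta}$ of real part $1$) while the nonlinear network remains stable through center-manifold effects, so ``network stable $\Rightarrow \mathbf{J}$ Hurwitz,'' the step your chain of implications passes through, can genuinely fail.

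The repair is cheap, and your closing remark about imaginary-axis eigenvalues nearly contains it, but it belongs in the body of the argument, not as an afterthought: from network stability conclude only --- via the contrapositive of (ii) --- that $\sigma(\mathbf{J})$ avoids the \emph{open} RHP, i.e.\ $\operatorname{Re}(\lambda)+\operatorname{Re}(\mu)\le 0$ for every $\lambda\in\sigma\bigl(\tfrac{\partial f}{\partial x}(x_0)\bigr)$ and $\mu\in\sigma(L_{\alpha\beta})$. Fixing $\mu^{\ast}$ with $\operatorname{Re}(\mu^{\ast})>0$ then gives $\operatorname{Re}(\lambda)\le -\operatorname{Re}(\mu^{\ast})<0$ for every node eigenvalue $\lambda$, so the node Jacobian is Hurwitz and the node dynamics is stable by the valid direction (i); the strict positivity of $\operatorname{Re}(\mu^{\ast})$ is exactly what converts the non-strict bound into a strict one. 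Note that the paper's contrapositive route sidesteps this pitfall entirely: it uses only the correct implications, deducing from ``node not stable'' that some $\operatorname{Re}(\nu_0)\ge 0$ (contrapositive of (i)) and then from $\operatorname{Re}(\nu_0+\lambda_0)>0$ that the network is unstable (statement (ii)). With the non-strict inequality substituted where you currently assert Hurwitzness of $\mathbf{J}$, your proof is correct.
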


\begin{proof}
    Note that under the assumption, there exists $\lambda_0 \in \sigma\left(L_{\alpha\beta}\right)$ such that $\operatorname{Re}({\lambda_0}) > 0$. Now, if the node dynamics is not stable, then there exists some $\nu_0 \in \sigma\left(\frac{\partial f}{\partial x}  (0)\right)$ with $\operatorname{Re}({\nu_0}) \ge 0.$ Conclude with (\ref{eq:JacobianSpectrum_nwkDyn}) that the spectrum of the Jacobian $\mathbf{J}$ must intersect the RHP. Hence (\ref{eq:compactNwkDynamics}) cannot be stable.
\end{proof}

We now assume that $\abs{\alpha} = \abs{\beta}$ for the remainder of this section. Under this assumption, the dynamics (\ref{eq:compactNwkDynamics}) only depends on one parameter whose magnitude is called the \emph{coupling strength}. This parameter has a strong effect on the network dynamics. For instance, it can potentially \emph{destabilize} a network of stable isolated dynamics,  or on the contrary \emph{stabilize} a network of unstable node dynamics as we'll see momentarily. This shows that the condition in Proposition \ref{prop:NecessaryCond_stability-NwkDyn} is only necessary but not sufficient in general. Let's point out that the tuning of this \emph{control parameter} can generate interesting bifurcations in the network dynamics, but this is beyond the scope of the present communication.

\begin{remark}\label{rmk:diffusive-infusive-coupling}
    The case $\beta = -\alpha$ is often referred to as \emph{ diffusive coupling} in the literature. So, we'll refer to  the case $\beta = \alpha$ as \emph{signless-diffusive coupling}.
\end{remark}

Let $L^{\pm} = D^{in}\pm A$ be the \emph{network coupling Laplacian}, where we lumped together in a single notation the \emph{"classical" Laplacian} $L^{-} = D^{in} -  A$ and the  \emph{signless Laplacian} $L^{+} = D^{in} +  A$. The latter unlike the former, does not frequently appear in the literature, and  one of the earliest research work pertaining to it can be traced back to \cite{desai1994characterization}.

  Let $\sigma\left(L^{\pm}\right):= \left\{\lambda^{\pm}_1,\cdots,\lambda^{\pm}_N\right\}$ be the associated spectra. The spectrum of the network Jacobian (\ref{eq:JacobianSpectrum_nwkDyn}) now takes the form: 
\begin{align}\label{eq:Jacobian-Ntwk_plus_minus}
    \sigma(\mathbf{J}) = \left\{\nu_i +\alpha \lambda^{\pm}_j ~\rvert~ 1\le i \le d, \quad 1\le j \le N \right\}
\end{align}

Define $\nu_{\max}:=\max\left\{ \operatorname{Re}({\nu_i})  ~\rvert~ 1\le i \le d \right\}$, and similarly for $\lambda^{\pm}_{\min}$ and $\lambda^{\pm}_{\max}$. We assume that $\nu_{\max}$ does not vanish unless otherwise indicated. We have the following proposition.\vspace{5pt}

\begin{proposition}\label{prop:Stability-Interval_NtwkDyn}
      Suppose the node dynamics is  stable. There exists a critical coupling parameter value $\alpha_{c} > 0$, such that the network (\ref{eq:compactNwkDynamics}) is  stable (exponentially) for $ \alpha \in (-\infty, \alpha_{c})$ and unstable for $ \alpha \in (\alpha_{c},\infty).$ 
\end{proposition}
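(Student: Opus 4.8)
The plan is to reduce the (local, exponential) stability of the equilibrium at the origin to a purely spectral condition on the Jacobian $\mathbf{J}$, and then to track how the largest eigenvalue real part moves as the scalar parameter $\alpha$ is varied. By Lyapunov's indirect method the origin is locally exponentially stable precisely when every eigenvalue of $\mathbf{J}$ lies in the open left half plane, and unstable as soon as one eigenvalue has strictly positive real part. Using the spectral formula (\ref{eq:Jacobian-Ntwk_plus_minus}), this amounts to controlling the sign of
\begin{equation*}
g(\alpha) := \max_{i,j}\operatorname{Re}\!\left(\nu_i + \alpha\lambda_j^{\pm}\right).
\end{equation*}
Since $\alpha$ is real, $\operatorname{Re}(\nu_i + \alpha\lambda_j^{\pm}) = \operatorname{Re}(\nu_i) + \alpha\operatorname{Re}(\lambda_j^{\pm})$, so the maximization decouples across $i$ and $j$ and gives $g(\alpha) = \nu_{\max} + \max_j \alpha\operatorname{Re}(\lambda_j^{\pm})$. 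The hypothesis that the node dynamics is stable enters here as $\nu_{\max} < 0$.

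First I would establish the two structural facts about the spectrum of $L^{\pm}$ that drive the result: (i) $\operatorname{Re}(\lambda_j^{\pm}) \ge 0$ for every $j$, so that $\lambda_{\min}^{\pm} \ge 0$; and (ii) $\lambda_{\max}^{\pm} > 0$. For undirected graphs both $L^{-}$ and $L^{+}$ are symmetric positive semidefinite, so (i) is immediate; in the directed case it follows from a Gershgorin argument, since the adjacency entries are nonnegative and there are no self-loops, every Gershgorin disk of $L^{\pm} = D^{in} \pm A$ is centered at $D^{in}_{ii} \ge 0$ with radius exactly $D^{in}_{ii}$, hence contained in the closed right half plane. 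Fact (ii) follows from trace positivity, $\sum_j \operatorname{Re}(\lambda_j^{\pm}) = \operatorname{tr}(L^{\pm}) = \sum_i D^{in}_{ii} > 0$ for a connected graph with at least one edge, so at least one eigenvalue has positive real part.

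With these in hand I would split on the sign of $\alpha$. For $\alpha \le 0$, because $\operatorname{Re}(\lambda_j^{\pm}) \ge 0$ the maximizing index is $\lambda_{\min}^{\pm}$ and $\max_j \alpha\operatorname{Re}(\lambda_j^{\pm}) = \alpha\lambda_{\min}^{\pm} \le 0$, whence $g(\alpha) \le \nu_{\max} < 0$ and the network is exponentially stable. For $\alpha > 0$ the maximizing index is $\lambda_{\max}^{\pm}$, so $g(\alpha) = \nu_{\max} + \alpha\lambda_{\max}^{\pm}$ is affine and strictly increasing in $\alpha$, negative near $0$ and tending to $+\infty$; it vanishes at the unique value
\begin{equation*}
\alpha_c := -\frac{\nu_{\max}}{\lambda_{\max}^{\pm}} > 0,
\end{equation*}
giving $g(\alpha) < 0$ (stability) for $\alpha < \alpha_c$ and $g(\alpha) > 0$ (a Jacobian eigenvalue in the open RHP, hence instability) for $\alpha > \alpha_c$. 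Combining the two regimes yields exactly the claimed dichotomy on $(-\infty,\alpha_c)$ versus $(\alpha_c,\infty)$. I expect the main obstacle to be item (i) in the directed setting: one must verify that $L^{\pm}$ has no eigenvalue with negative real part without the convenience of symmetry, which is precisely where the Gershgorin localization together with the sign structure of $A$ and $D^{in}$ is essential, whereas the undirected case reduces to the classical positive semidefiniteness of the (signless) Laplacian.
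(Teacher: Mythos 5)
Your proof is correct and takes essentially the same route as the paper's: both reduce stability to the additive spectral formula (\ref{eq:Jacobian-Ntwk_plus_minus}), use $\operatorname{Re}(\lambda_j^{\pm})\ge 0$ to dispose of $\alpha\le 0$, and for $\alpha>0$ identify $\nu_{\max}+\alpha\lambda_{\max}^{\pm}$ as the dominant eigenvalue real part, yielding the same critical value $\alpha_c=-\nu_{\max}/\lambda_{\max}^{\pm}$. The only difference is that you explicitly justify the two spectral facts the paper merely asserts (closed-right-half-plane localization of $\sigma(L^{\pm})$ via Gershgorin/positive semidefiniteness, and $\lambda_{\max}^{\pm}>0$ via the trace), which strengthens rather than alters the argument.
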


\begin{proof}
        The spectrum of $L^{\pm}$ lies in the open right half plane union the origin, thus $\operatorname{Re}(\lambda_j^{\pm}) \ge 0$ for all $j$.  If $\alpha \le 0$, then $\operatorname{Re}(\nu_i+\alpha\lambda_j^{\pm}) < 0$ for all $1\le i\le d$ and $1\le j\le N$. Hence the network is  stable for all $\alpha \in (-\infty, 0]$. If however $\alpha > 0$, then   $\nu_{\max}+\alpha\lambda_{\max}^{\pm}$ is the  top  element of the bounded lattice $\left(\operatorname{Re}(\sigma(\mathbf{J})), \le  \right)$. Therefore for $\alpha_c:= -\frac{\nu_{\max}}{\lambda_{\max}^{\pm}}$, it follows that the network is  stable for all $\alpha \in (0, \alpha_c)$ and unstable for $\alpha \in (\alpha_c, \infty)$. Note that $\lambda_{\max}$ does not vanish since  the network topology is connected.
\end{proof}

\subsection{Coupling Induced Stabilization}
In light of Proposition \ref{prop:NecessaryCond_stability-NwkDyn} which gives a necessary condition for the stability of the network, the natural question that arises is that of whether a network of unstable node dynamics can be stabilized by means of mere coupling? We answer this question in the \emph{affirmative} provided that the \emph{spectrum of the network Laplacian does not contain the origin}. Otherwise, active control such as feedback is required to stabilize the network. 

\begin{proposition}\label{prop:Critical_Stabilizing_coupling_val}
      Assume $0 \notin \sigma(L^{\pm})$ and the node dynamics is unstable. There exits a critical coupling parameter value $\alpha_c < 0$ below which the network is exponentially stable and above which the network is unstable.
\end{proposition}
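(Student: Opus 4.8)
The plan is to mirror the argument of Proposition~\ref{prop:Stability-Interval_NtwkDyn}, exploiting the additive structure of the Jacobian spectrum in~(\ref{eq:Jacobian-Ntwk_plus_minus}), but with the roles of the extremal Laplacian eigenvalues interchanged. Since stability of the linearization at a hyperbolic equilibrium is governed by the sign of the largest real part in $\sigma(\mathbf{J})$, I would first locate the top element of the lattice $\left(\operatorname{Re}(\sigma(\mathbf{J})),\le\right)$ as a function of $\alpha$, and then pin down the parameter value at which that top real part crosses the imaginary axis.

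First I would record the two sign facts that drive the whole argument. Because the node dynamics is unstable and $\nu_{\max}$ is assumed not to vanish, we have $\nu_{\max} > 0$. Because $0 \notin \sigma(L^{\pm})$ while the spectrum of $L^{\pm}$ lies in the open right half plane union the origin, every eigenvalue now satisfies $\operatorname{Re}(\lambda_j^{\pm}) > 0$ strictly, so in particular $\lambda_{\min}^{\pm} > 0$. This strict positivity is precisely where the hypothesis $0 \notin \sigma(L^{\pm})$ enters, and it is what makes coupling-induced stabilization possible.

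The key step is identifying which Laplacian eigenvalue is binding. Writing a generic Jacobian eigenvalue as having real part $\operatorname{Re}(\nu_i) + \alpha \operatorname{Re}(\lambda_j^{\pm})$ and using that the sum set decomposes additively, the maximal real part equals $\nu_{\max} + \alpha\, \min_j \operatorname{Re}(\lambda_j^{\pm})$ whenever $\alpha < 0$, since multiplication by a negative scalar carries the \emph{largest} real part of $L^{\pm}$ to the \emph{smallest}. Thus the relevant extreme is $\lambda_{\min}^{\pm}$ rather than $\lambda_{\max}^{\pm}$; this swap, forced by the sign of $\alpha$, is the essential departure from Proposition~\ref{prop:Stability-Interval_NtwkDyn}. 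Setting the top real part $\nu_{\max} + \alpha\,\lambda_{\min}^{\pm}$ to zero yields the candidate
\[
\alpha_c := -\frac{\nu_{\max}}{\lambda_{\min}^{\pm}},
\]
which is strictly negative because both $\nu_{\max}$ and $\lambda_{\min}^{\pm}$ are positive. For $\alpha < \alpha_c$ the top real part is negative, so by hyperbolicity the network is exponentially stable; for $\alpha_c < \alpha < 0$ it is positive and the network is unstable, and for $\alpha \ge 0$ the binding term reverts to $\alpha\,\lambda_{\max}^{\pm}$, giving $\nu_{\max} + \alpha\,\lambda_{\max}^{\pm} > 0$, so instability persists there as well. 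This establishes the claimed dichotomy.

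The main obstacle I anticipate is the bookkeeping for directed topologies, where $L^{\pm}$ may carry genuinely complex eigenvalues: one must work consistently with real parts throughout the decomposition of the sum set, and verify that the minimal-real-part quantity $\lambda_{\min}^{\pm}$ is both the correct binding value and strictly positive. A secondary point is to note that $\alpha = \alpha_c$ itself is non-hyperbolic and is excluded from both regimes, so the strict stable/unstable alternative is clean, exactly as in the companion proposition.
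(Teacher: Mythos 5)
Your proposal is correct and follows essentially the same route as the paper's proof: both reduce to the sum-set structure of $\sigma(\mathbf{J})$ in (\ref{eq:Jacobian-Ntwk_plus_minus}), observe that for $\alpha<0$ the binding eigenvalue swaps from $\lambda_{\max}^{\pm}$ to $\lambda_{\min}^{\pm}$ so that $\nu_{\max}+\alpha\lambda_{\min}^{\pm}$ is the maximal real part, and set $\alpha_c=-\nu_{\max}/\lambda_{\min}^{\pm}$, with the hypothesis $0\notin\sigma(L^{\pm})$ guaranteeing $\lambda_{\min}^{\pm}>0$. Your write-up is in fact more explicit than the paper's (which defers to Proposition~\ref{prop:Stability-Interval_NtwkDyn}), notably in handling the regime $\alpha\ge 0$ and the non-hyperbolic boundary case $\alpha=\alpha_c$.
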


\begin{proof}
    The argument of the proof here is similar to that of Proposition \ref{prop:Stability-Interval_NtwkDyn}, but noting that $\nu_{\max}+\alpha\lambda_{\min}^{\pm}$ is now the maximal element of the ordered set $\left(\operatorname{Re}(\sigma(\mathbf{J})), \le  \right)$. Here we have $\alpha_c:= -\frac{\nu_{\max}}{\lambda_{\min}^{\pm}}$.  Furthermore,  note that $\lambda_{\min}^{\pm}$ does not vanish since the origin is excluded from the geometry of the spectrum of $L^{\pm}$.
\end{proof}

\begin{proposition}\label{prop:Impossible-coupling-Stabilization}
   Suppose the node dynamics unstable. If the spectrum of the network Laplacian contains the origin, then the network cannot be stabilized by coupling alone.
\end{proposition}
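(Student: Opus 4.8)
The plan is to read off the obstruction directly from the spectral decomposition \eqref{eq:Jacobian-Ntwk_plus_minus} of the network Jacobian, exploiting the fact that a vanishing Laplacian eigenvalue is inert under scaling by the coupling parameter. The governing observation is that the coupling strength enters $\sigma(\mathbf{J})$ only through the products $\alpha\lambda_j^{\pm}$, so a zero eigenvalue of $L^{\pm}$ effectively decouples an entire copy of the node spectrum from the coupling, leaving it frozen in place as $\alpha$ varies.

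First I would invoke the hypothesis $0 \in \sigma(L^{\pm})$ to fix an index $j_0$ with $\lambda_{j_0}^{\pm} = 0$. Substituting this into \eqref{eq:Jacobian-Ntwk_plus_minus} shows that the numbers $\nu_i + \alpha\lambda_{j_0}^{\pm} = \nu_i$, for $1 \le i \le d$, belong to $\sigma(\mathbf{J})$ for \emph{every} value of $\alpha$; that is, a full, $\alpha$-independent copy of the node-dynamics spectrum persists unaltered in the network spectrum.

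Next I would use the instability of the node dynamics. By definition this means $\nu_{\max} = \max_i \operatorname{Re}(\nu_i) \ge 0$, so there is an index $i_0$ with $\operatorname{Re}(\nu_{i_0}) = \nu_{\max} \ge 0$. The eigenvalue $\nu_{i_0} = \nu_{i_0} + \alpha\lambda_{j_0}^{\pm}$ then sits in the closed right half plane for all $\alpha \in \mathbb{R}$, so $\operatorname{Re}(\sigma(\mathbf{J}))$ cannot be driven entirely into the open left half plane by tuning the coupling strength. Hence no choice of $\alpha$ renders \eqref{eq:compactNwkDynamics} exponentially (or even asymptotically) stable, which is precisely the assertion that coupling alone cannot stabilize the network.

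I do not expect a genuine analytical obstacle here: the result is a clean corollary of the additive structure of the spectrum. The only points requiring care are conceptual, namely making explicit that ``stabilization by coupling alone'' refers to tuning the single control parameter $\alpha$ in the regime $\abs{\alpha} = \abs{\beta}$, and noting that the surviving node eigenvalue lying in the closed right half plane already precludes exponential stability regardless of whether $\operatorname{Re}(\nu_{i_0})$ is strictly positive or zero. This also clarifies, contrapositively, why the hypothesis $0 \notin \sigma(L^{\pm})$ was indispensable in Proposition \ref{prop:Critical_Stabilizing_coupling_val}: it is exactly this zero Laplacian eigenvalue that shields the unstable node mode from the action of the coupling.
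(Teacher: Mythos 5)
Your argument is correct and is essentially the paper's own proof, which likewise reads off from the additive spectral decomposition \eqref{eq:Jacobian-Ntwk_plus_minus} that a zero eigenvalue of $L^{\pm}$ leaves an $\alpha$-independent copy of the node spectrum inside $\sigma(\mathbf{J})$, so the unstable manifold stays nontrivial for every coupling strength. The paper compresses this into one sentence (and, under its standing assumption $\nu_{\max}\neq 0$, need not worry about the marginal case $\nu_{\max}=0$ that you carefully flag); your write-up is simply the expanded version of the same reasoning.
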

\begin{proof}
    It follows from  (\ref{eq:Jacobian-Ntwk_plus_minus}) that the unstable manifold is nontrivial,  irrespective of the coupling parameter $\alpha$. 
\end{proof}

Since  $0 \in \sigma\left(L^-\right)$, we always need active control (e.g. feedback,$\cdots$ etc) to stabilize a network of unstable  diffusively coupled  dynamics.

On the other hand, if $\nu_{\max} $ vanishes then the dynamics on the \emph{center manifold} of the network system becomes relevant especially for $\alpha < 0$. Hence a direct corollary of Proposition \ref{prop:Impossible-coupling-Stabilization} can  be stated in a more evocative way as follows: \emph{Diffusion impedes coupling-induced stabilization of a network system}.

\subsection{Connection to Bipartite Graphs}

The  network Laplacian $L^-$ has the zero-row sum property, thus active control is needed to stabilize a network of unstable systems in this configuration (see Proposition \ref{prop:Impossible-coupling-Stabilization}). How about $L^+$?  

According to Proposition \ref{prop:Critical_Stabilizing_coupling_val}, as long as the spectrum of the network Laplacian  $L^+$ does not contain the origin, then coupling-induced stabilization is feasible. So, in this section we're interested in characterizing what kind of network topology yields a network Laplacian $L^+$ whose spectrum avoids the origin. The answer is given by the following proposition.

\begin{proposition}\label{prop:L-plus-Laplacian-inveritibility}
    The network Laplacian $L^+$ is  invertible if and only if the underlying graph is non-bipartite.
\end{proposition}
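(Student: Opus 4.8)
The plan is to exploit the factorization of the signless Laplacian through the (unoriented) incidence matrix of the graph. Let $B \in \mathbb{R}^{N\times m}$ denote the incidence matrix whose $(v,e)$ entry equals $1$ when vertex $v$ is an endpoint of edge $e$ and $0$ otherwise, with $m$ the number of edges. I would begin by establishing the identity $L^+ = BB^T$ through a direct entry-wise computation: the diagonal entries of $BB^T$ recover the in-degrees $D^{in}$, while the off-diagonal $(i,j)$ entry counts the edges joining $i$ and $j$, reproducing $\mathbf{A}$. Equivalently, the associated quadratic form is
\begin{equation}
    x^T L^+ x = \norm{B^T x}^2 = \sum_{(i,j)\in E}(x_i + x_j)^2,
\end{equation}
which already exhibits $L^+$ as positive semidefinite.

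Given this factorization, invertibility of $L^+$ is equivalent to positive definiteness, hence to the triviality of its kernel. Since $x^T L^+ x = \norm{B^T x}^2$, we have $L^+ x = 0$ if and only if $B^T x = 0$, that is, if and only if $x_i + x_j = 0$ for every edge $(i,j)$. The second step is therefore to translate this linear-algebraic condition into a combinatorial one: a nonzero $x$ satisfying $x_i = -x_j$ along every edge is precisely a consistent assignment of alternating signs to the vertices.

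The decisive step is to show that such a nonzero alternating assignment exists if and only if the (connected) graph is bipartite. For the \emph{if} direction, bipartiteness furnishes a partition of the vertex set into two parts with all edges crossing between them; setting $x = +1$ on one part and $x = -1$ on the other yields a nonzero null vector, so $L^+$ is singular. For the \emph{only if} direction, I would trace the forced sign propagation along walks: fixing the value at one vertex determines it everywhere by connectivity, and traversing any odd cycle returns a value opposite to the starting one, forcing $x \equiv 0$; the absence of odd cycles is exactly bipartiteness. Contrapositively, a non-bipartite graph contains an odd cycle, so $\ker L^+ = \{0\}$ and $L^+$ is invertible.

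The main obstacle is the odd-cycle sign-propagation argument in the \emph{only if} direction, where one must argue that connectivity makes the assignment rigid and that any odd cycle produces the contradiction $c = -c$; this is the point at which the combinatorial characterization (2-colorability $\iff$ absence of odd cycles) genuinely enters. I would also flag that the factorization $L^+ = BB^T$ presupposes the symmetric (undirected) adjacency matrix, so the cleanest statement lives in the undirected setting; for a directed graph one would interpret $L^+$ via its underlying undirected skeleton, on which the notion of bipartiteness is defined.
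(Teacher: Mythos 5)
Your proof is correct, but it takes a different route from the paper: the paper disposes of this proposition in one line by citing Proposition~2.1 of the Desai--Rao paper \cite{desai1994characterization} on the smallest eigenvalue of the signless Laplacian, whereas you prove the statement from scratch. Your argument --- the factorization $L^+ = BB^T$ with the unoriented incidence matrix $B$, the resulting quadratic form $x^T L^+ x = \sum_{(i,j)\in E}(x_i+x_j)^2$, and the combinatorial step that a nonzero alternating assignment ($x_i = -x_j$ across every edge) exists on a connected graph precisely when the graph is $2$-colorable, i.e.\ has no odd cycle --- is the standard self-contained proof of this fact, and each step checks out: $BB^T$ has diagonal $D$ and off-diagonal entries $\mathbf{A}_{ij}$ for a simple undirected graph; $L^+x=0$ iff $B^Tx=0$ by positive semidefiniteness; and the sign-propagation rigidity plus an odd cycle forces $x\equiv 0$ (the modulus $\abs{x_i}$ is constant on a connected graph, and the odd cycle pins it to zero). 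What your approach buys is transparency and self-containedness, and it makes explicit two hypotheses the paper leaves implicit: connectivity (without it the correct statement is that no connected component is bipartite) and undirectedness (for directed graphs $D^{in}+\mathbf{A}$ need not be symmetric, so the factorization fails and bipartiteness must be read off the underlying undirected skeleton). What the paper's citation buys is brevity and access to the sharper quantitative result in \cite{desai1994characterization}, which bounds the smallest eigenvalue of $L^+$ in terms of how far the graph is from being bipartite, rather than only characterizing when that eigenvalue is zero.
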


\begin{proof}
    This is a consequence of Proposition 2.1 in \cite{desai1994characterization}.
\end{proof}

\begin{proposition}
    Let $P_n$, $S_n$ and $C_n$  denote the Path, Star and Cycle graph on $n$ nodes respectively \cite{newman2018networks}. The following  statements about signless-diffusively coupled systems can be inferred from  \cref{prop:Critical_Stabilizing_coupling_val,prop:Impossible-coupling-Stabilization,prop:L-plus-Laplacian-inveritibility}.

    \begin{enumerate}
    \item One can stabilize a network of unstable node dynamics on $C_{2n+1}$ using coupling alone.
        \item  One cannot stabilize a network of unstable node dynamics on $P_n$, $S_n$ and $C_{2n}$ using coupling alone.
    \end{enumerate}
\end{proposition}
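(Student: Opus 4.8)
The plan is to reduce each claim to a question about the bipartiteness of the underlying graph and then invoke the machinery already established. The bridge is \cref{prop:L-plus-Laplacian-inveritibility}: the signless Laplacian $L^+$ is invertible exactly when the graph is non-bipartite, which is equivalent to the statement $0 \notin \sigma(L^+)$. Once the bipartiteness of each graph is settled, \cref{prop:Critical_Stabilizing_coupling_val} handles the non-bipartite case (coupling-induced stabilization is feasible) while \cref{prop:Impossible-coupling-Stabilization} handles the bipartite case (coupling alone cannot stabilize).

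First I would recall the standard graph-theoretic fact that a graph is bipartite if and only if it contains no cycle of odd length, equivalently if and only if it admits a proper $2$-coloring. With this in hand, the classification of the three families is elementary. The path $P_n$ and the star $S_n$ are both trees, hence acyclic, and every tree is bipartite (two-color the vertices by the parity of their distance to a fixed root). The even cycle $C_{2n}$ is bipartite because alternating two colors around the cycle closes up consistently precisely when the length is even. The odd cycle $C_{2n+1}$, by contrast, is itself an odd cycle and therefore fails the $2$-coloring test, so it is non-bipartite.

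Combining these observations with \cref{prop:L-plus-Laplacian-inveritibility} gives $0 \in \sigma(L^+)$ for $P_n$, $S_n$, and $C_{2n}$, while $0 \notin \sigma(L^+)$ for $C_{2n+1}$. For the odd cycle, \cref{prop:Critical_Stabilizing_coupling_val} then yields a negative critical value $\alpha_c$ below which the signless-diffusively coupled network of unstable nodes is exponentially stable, establishing item (1). For the three bipartite families, \cref{prop:Impossible-coupling-Stabilization} shows that the network retains a nontrivial unstable manifold for every choice of the coupling parameter $\alpha$, establishing item (2).

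Since every step is either a direct appeal to a cited proposition or an elementary coloring argument, I do not anticipate a genuine obstacle. The only point demanding mild care is the star graph: one must confirm it is a tree (a center joined to leaves, hence acyclic) rather than mistaking it for a structure that could harbor an odd cycle. Beyond this bookkeeping, the proposition is an immediate corollary of the results already in hand.
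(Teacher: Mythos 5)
Your proposal is correct and follows exactly the route the paper intends: the paper offers no separate proof beyond citing \cref{prop:Critical_Stabilizing_coupling_val,prop:Impossible-coupling-Stabilization,prop:L-plus-Laplacian-inveritibility}, and your bipartiteness classification (trees and even cycles are bipartite, odd cycles are not) is precisely the elementary graph-theoretic bridge those citations presuppose.
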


\section{Switched Network Dynamical Systems} In this section, we are  interested in the dynamical system (\ref{eq:compactNwkDynamics}) when the underlying graph topology $G$ is allowed to change over time. 
We consider a collection of linearly coupled network dynamical systems given by (\ref{eq:compactNwkDynamics}) where the underlying connectivity topology takes values in a \emph{finite} index set $\mathcal{G}:= \{G_1,\cdots, G_m\}$ of connected graphs defined on the same vertex set $V$. 

\subsection{ The Formulation}

A switched network system is made of two components:

\begin{enumerate}
    \item  A collection of vector fields $F_{G_t}: \mathbb{R}^{Nd} \rightarrow \mathbb{R}^{Nd}$. In this work, these are copies of  (\ref{eq:compactNwkDynamics}), namely $F_{G_t}(\textbf{x}) = \textbf{f}(\textbf{x})+\left(L_{G_t}\otimes M\right)\textbf{x}$, where $L_{G_t}$ is the coupling Laplacian associated with   the graph $G_t \in \mathcal{G}$. These are called \emph{modes} or \emph{subsystems} of the switched network system.
    \item A function of time $\sigma$ called \emph{switching signal} which specifies which mode is active at any given time instant, namely $\sigma: [0,\infty) \rightarrow \mathcal{G}$.  The main requirement on this switching signal is that on any bounded time interval, the number of switches be finite in order to avoid "unpleasant" phenomena such as \emph{chattering}. In general, $\sigma$ will be assume to be piecewise constant.
\end{enumerate}

The switched network dynamics takes the following form:

\begin{align}\label{eq:Switched-Nwk-syst}
    \dot{\textbf{x}}(t) = F_{\sigma(t)}\left(\textbf{x}(t)\right)
\end{align}

where $F_{\sigma(t)}  = \textbf{f}(\mathbf{x})+ \left(L_{\alpha\beta}^{\sigma(t)}\otimes M\right)\textbf{x}$~, with $\sigma(t) = G_t \in \mathcal{G}$ for all $t\in [0,\infty)$.

System (\ref{eq:Switched-Nwk-syst}) falls within the broader class of switched systems for which there are some available general conditions that guarantee stability under arbitrary switching. See for instance \cite{liberzon2003switching} for a good presentation of this topic.

Let's assume that the isolated dynamic has an equilibrium at the origin,  then it follows from Proposition \ref{prop:equilibria} that each mode $F_{\sigma}$ also has an equilibrium at the origin. We're interested in the stability (asymptotic)  properties of this equilibrium for the switched system (\ref{eq:Switched-Nwk-syst}). 


One of the most studied stability properties for switched systems like (\ref{eq:Switched-Nwk-syst}) is \emph{Global Uniform Asymptotic Stability} (GUAS) which stipulates that (\ref{eq:Switched-Nwk-syst}) is GUAS if there exists a class $\mathcal{KL}$ \cite{Khalil:1173048} function $\beta$ such that for all solutions of (\ref{eq:compactNwkDynamics}) the following holds:

\begin{align}\label{in:GUAS}
    |\textbf{x}(t)| \le \beta(|\textbf{x}(0)|,t)  \quad \forall t\ge 0, \quad \forall \sigma(\cdot)
\end{align}


In particular, if $\beta(x,t) = ce^{-s t}x$ for some constants $c,s>0$, then we say that (\ref{in:GUAS}) is Globally Uniformly Exponentially Stable (GUES). It's important to note that one can also  relax the  \emph{Global} requirement to only consider the \emph{Local} requirement in which the bound (\ref{in:GUAS}) above is only valid in a neighborhood of the origin.

\subsection{Stability Analysis of the Switched Network System}

In this section, we examine conditions under which the switched system (\ref{eq:Switched-Nwk-syst}) possesses a common Lyapunov function.

\paragraph{Common Quadratic Lyapunov Function}

The following proposition gives a sufficient condition  for the existence of a common quadratic Lyapunov function when the index set $\mathcal{G}$ contains undirected graphs only.

\begin{theorem}\label{control::thm:Quadratic-Lyapunov-Function__undirect_graphs}
    Suppose the node dynamics has a quadratic Lyapunov function. If  the coupling Laplacian $L_{\alpha\beta}^{\sigma(t)}$ satisfies $\alpha < 0 $ and $|\alpha| \ge |\beta|$ for all $t\ge0$,  then  the switched network system possesses a common quadratic Lyapunov function.
\end{theorem}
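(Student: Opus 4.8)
The plan is to build a single quadratic form from the node-level Lyapunov function and to reduce the whole argument to one linear-algebra fact about the coupling Laplacian of an undirected graph. Let $V(x)=x^{T}Px$ with $P\succ 0$ be the quadratic Lyapunov function supplied for the isolated dynamics, so that $2x^{T}Pf(x)\le 0$ along $\dot x=f(x)$. Taking $M=I_d$ as permitted by the normalization in the Preliminaries, I would propose the mode-independent aggregate
\[
  \mathbf{V}(\mathbf{x})=\sum_{i=1}^{N}V(x_i)=\mathbf{x}^{T}\!\left(I_N\otimes P\right)\mathbf{x},
\]
which is positive definite on $\mathbb{R}^{Nd}$ because $P\succ 0$. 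It then suffices to verify that $\dot{\mathbf{V}}\le 0$ simultaneously along every subsystem $F_{G_t}$.

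Differentiating along an arbitrary mode and using the Kronecker identity $(I_N\otimes P)(L_{\alpha\beta}^{\sigma}\otimes I_d)=L_{\alpha\beta}^{\sigma}\otimes P$, the derivative separates into a node part and a coupling part,
\[
  \dot{\mathbf{V}}=\sum_{i=1}^{N}2x_i^{T}Pf(x_i)+2\,\mathbf{x}^{T}\!\left(L_{\alpha\beta}^{\sigma}\otimes P\right)\mathbf{x}.
\]
The node part is nonpositive termwise by the Lyapunov hypothesis, so everything hinges on the coupling part. Since $P\succ 0$, the eigenvalues of $L_{\alpha\beta}^{\sigma}\otimes P$ are the products of those of $L_{\alpha\beta}^{\sigma}$ with the strictly positive eigenvalues of $P$; hence $L_{\alpha\beta}^{\sigma}\otimes P\preceq 0$ exactly when $L_{\alpha\beta}^{\sigma}\preceq 0$. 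The theorem thus comes down to the claim that $L_{\alpha\beta}=\alpha D^{in}+\beta\mathbf{A}\preceq 0$ whenever $\alpha<0$ and $|\alpha|\ge|\beta|$.

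This is the step I expect to be the crux, and it is where the undirected hypothesis is essential: for an undirected graph $\mathbf{A}$ is symmetric, $D^{in}$ is the ordinary degree matrix, and both $L^{-}=D^{in}-\mathbf{A}$ and $L^{+}=D^{in}+\mathbf{A}$ are symmetric and positive semidefinite. Writing $\alpha=-|\alpha|$, I would exhibit the conic decomposition
\[
  -L_{\alpha\beta}=\left(|\alpha|-|\beta|\right)D^{in}+|\beta|\,L^{\star},
\]
where $L^{\star}=L^{-}$ if $\beta>0$ and $L^{\star}=L^{+}$ if $\beta<0$ (the case $\beta=0$ being immediate). Since $|\alpha|-|\beta|\ge 0$ and $D^{in},L^{\pm}\succeq 0$, the right-hand side is a nonnegative combination of positive semidefinite matrices, so $-L_{\alpha\beta}\succeq 0$, i.e.\ $L_{\alpha\beta}\preceq 0$.

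Combining the two estimates gives $\dot{\mathbf{V}}\le 0$ along each subsystem, and because the bound $\alpha<0,\ |\alpha|\ge|\beta|$ is assumed for all $t$, hence for every graph in the finite set $\mathcal{G}$, the single function $\mathbf{V}$ is a common quadratic Lyapunov function. The remaining delicate points are minor: the reduction $M=I_d$ is already justified in the Preliminaries, and if one wants strict decrease (and hence a GUAS conclusion) rather than mere nonincrease, I would either take the node Lyapunov function to be strict, forcing the node part to be negative for $\mathbf{x}\ne 0$, or restrict to a neighborhood of the origin on which the quadratic estimate for $f$ holds, yielding the local version of the statement.
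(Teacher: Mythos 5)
Your proposal is correct and follows essentially the same route as the paper's proof: the same candidate $\mathbf{V}(\mathbf{x})=\mathbf{x}^{T}(I_N\otimes P)\mathbf{x}$ (with $M=I_d$ so that $PM\succeq 0$), the same splitting of the Lie derivative into a nonpositive node part plus the coupling term $2\mathbf{x}^{T}\bigl(L_{\alpha\beta}^{\sigma}\otimes P\bigr)\mathbf{x}$, the same reduction to negative semidefiniteness of $L_{\alpha\beta}^{\sigma}$ for undirected graphs under $\alpha<0$, $|\alpha|\ge|\beta|$, and the same uniformity over the finite set $\mathcal{G}$. The only divergence is cosmetic: where the paper asserts the semidefiniteness via a spectral (Gershgorin-type) statement, you prove it by the explicit conic decomposition $-L_{\alpha\beta}=(|\alpha|-|\beta|)D^{in}+|\beta|L^{\pm}$, which is a clean, self-contained justification of the step the paper leaves implicit, and your closing remark on strictness of the decrease is a point the paper glosses over rather than a gap in your argument.
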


\begin{proof}
Note that for a fixed $t \ge$,  if $\alpha <0$ and $|\alpha| \ge |\beta|$ then the spectrum of the coupling Laplacian $L_{\alpha\beta}$ is contained in the left half-plane $\mathbb{C}_-$. In particular, if the underlying graph is undirected then $L_{\alpha\beta}$ is negative semi-definite. Now, let $v(x) = x^TPx$ be a quadratic Lyapunov function for the node dynamics, where $x\in \mathbb{R}^d$ and $P$ is a positive definite. The quadratic form $\mathcal{V}(\mathbf{x}) = \mathbf{x}^T(I_N\otimes P)\mathbf{x}$  is a common Lyapunov function  for the switched network system. In fact, $\mathcal{V}$ is evidently a positive definite. Its Lie derivative along  (\ref{eq:Switched-Nwk-syst}) is given by $\Dot{\mathcal{V}}(\mathbf{x}) = 2\mathbf{x}^TI_{N}\otimes P(\mathbf{f}(\mathbf{x})+L_{\alpha\beta}^{\sigma}\otimes M)\mathbf{x} = 2\sum_{i=1}^N{\Dot{v}(x_i)} + 2\mathbf{x}^T(L_{\alpha\beta}^{\sigma}\otimes PM)\mathbf{x}< 2\mathbf{x}^T(L_{\alpha\beta}^{\sigma}\otimes PM)\mathbf{x}$. 
In particular, if $M$ is chosen such that the product $PM$ is positive semi-definite (e.g. $M = I_d$), then we have the following uniform estimate: $\Dot{\mathcal{V}}(\mathbf{x}) < \underset{\sigma}{\sup}\{2\lambda_{\max}(L_{\alpha\beta}^{\sigma}\otimes PM) \}\mathbf{x}^T\mathbf{x} ~$. The result follows from compactness of the index set $\mathcal{G}$.
    
\end{proof}

\begin{corollary}
    If the node dynamics possesses a quadratic Lyapunov function and $\alpha <0$, then the  diffusive and signless-diffusive (see Remark \ref{rmk:diffusive-infusive-coupling}) coupled switched network system are GUES. 
\end{corollary}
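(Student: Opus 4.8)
The plan is to read the corollary as the two boundary instances of Theorem~\ref{control::thm:Quadratic-Lyapunov-Function__undirect_graphs} and then to upgrade its conclusion from the mere existence of a common quadratic Lyapunov function (which a priori yields only GUAS) to the exponential bound (\ref{in:GUAS}) with $\beta(x,t)=ce^{-st}x$. First I would observe that diffusive coupling corresponds to $\beta=-\alpha$ and signless-diffusive coupling to $\beta=\alpha$ (see Remark~\ref{rmk:diffusive-infusive-coupling}), so in both cases $|\beta|=|\alpha|$ and the hypothesis $|\alpha|\ge|\beta|$ holds with equality. Combined with the standing assumption $\alpha<0$ and the quadratic Lyapunov function of the node dynamics, the theorem applies verbatim and furnishes the common quadratic Lyapunov function $\mathcal{V}(\mathbf{x})=\mathbf{x}^T(I_N\otimes P)\mathbf{x}$, valid for every mode simultaneously.

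Next I would make the decay estimate from the proof of Theorem~\ref{control::thm:Quadratic-Lyapunov-Function__undirect_graphs} quantitative. The Lie derivative splits as $\dot{\mathcal{V}}(\mathbf{x})=2\sum_{i=1}^N \dot{v}(x_i)+2\mathbf{x}^T(L_{\alpha\beta}^{\sigma}\otimes PM)\mathbf{x}$. For undirected graphs with $\alpha<0$ and $|\beta|=|\alpha|$, the coupling Laplacian equals $\alpha L^{-}$ (diffusive) or $\alpha L^{+}$ (signless-diffusive), both negative semi-definite, so with $PM$ positive semi-definite the coupling term obeys $2\mathbf{x}^T(L_{\alpha\beta}^{\sigma}\otimes PM)\mathbf{x}\le 0$ for every $\sigma$. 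The crucial point is that the exponential rate cannot be harvested from the coupling term alone---indeed in the diffusive case $\alpha L^{-}$ has a zero eigenvalue along $\mathbf{1}$, so $\sup_{\sigma}\lambda_{\max}(L_{\alpha\beta}^{\sigma}\otimes PM)=0$ and the estimate of the theorem collapses to $\dot{\mathcal{V}}<0$---and must instead come from the node term. Using that the node-level quadratic Lyapunov function certifies $\dot{v}(x)\le-\gamma\|x\|^2$ for some $\gamma>0$, I would bound $2\sum_i\dot{v}(x_i)\le-2\gamma\|\mathbf{x}\|^2$, whence $\dot{\mathcal{V}}(\mathbf{x})\le-2\gamma\|\mathbf{x}\|^2$ uniformly in $\sigma$.

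Finally I would convert this into the exponential bound. Since $P$ is positive definite we have the sandwich $\lambda_{\min}(P)\|\mathbf{x}\|^2\le\mathcal{V}(\mathbf{x})\le\lambda_{\max}(P)\|\mathbf{x}\|^2$, so the uniform estimate becomes $\dot{\mathcal{V}}\le-\tfrac{2\gamma}{\lambda_{\max}(P)}\mathcal{V}$, and the comparison lemma yields $\mathcal{V}(\mathbf{x}(t))\le e^{-\frac{2\gamma}{\lambda_{\max}(P)}t}\,\mathcal{V}(\mathbf{x}(0))$ for every switching signal. Undoing the sandwich produces $|\mathbf{x}(t)|\le ce^{-st}|\mathbf{x}(0)|$ with $c=\sqrt{\lambda_{\max}(P)/\lambda_{\min}(P)}$ and $s=\gamma/\lambda_{\max}(P)$, which is precisely GUES.

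The step I expect to be the main obstacle is extracting the quantitative lower bound $\dot{v}(x)\le-\gamma\|x\|^2$ and pinning down its reach. A quadratic Lyapunov function only guarantees strict negativity $\dot v<0$, not a uniform quadratic decay rate, unless the node dynamics is (at least locally) exponentially stable; and the \emph{Global} in GUES demands this estimate on all of $\mathbb{R}^d$, which is automatic for linear Hurwitz node dynamics but must otherwise be read locally, consistent with the paper's standing local-stability convention. Care is also needed to confirm that the undirected-graph hypothesis of Theorem~\ref{control::thm:Quadratic-Lyapunov-Function__undirect_graphs} is inherited, since it is what makes $L^{\pm}$ symmetric and hence negative semi-definite for $\alpha<0$.
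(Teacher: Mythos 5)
Your proposal is correct and follows the same route the paper intends: the corollary is meant as a direct specialization of Theorem~\ref{control::thm:Quadratic-Lyapunov-Function__undirect_graphs}, since diffusive coupling ($\beta=-\alpha$) and signless-diffusive coupling ($\beta=\alpha$) both satisfy $|\alpha|\ge|\beta|$ with equality, and the paper supplies no further argument beyond that observation. Where you go beyond the paper---and usefully so---is in noticing that the theorem as proved only delivers a common Lyapunov function with $\dot{\mathcal{V}}<0$: in the diffusive case $L_{\alpha\beta}^{\sigma}=\alpha L^-$ has a zero eigenvalue along $\mathbf{1}$, so $\sup_{\sigma}\lambda_{\max}(L_{\alpha\beta}^{\sigma}\otimes PM)=0$ and the theorem's final estimate degenerates to strict negativity with no rate. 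For a switched system with \emph{nonlinear} modes, that conclusion by itself yields GUAS, not GUES (the automatic upgrade to an exponential rate is a feature of switched \emph{linear} systems with a common quadratic Lyapunov function). Your repair---harvesting the rate from the node term via $\dot v(x)\le-\gamma\|x\|^2$, then running the comparison lemma through the sandwich $\lambda_{\min}(P)\|\mathbf{x}\|^2\le\mathcal{V}(\mathbf{x})\le\lambda_{\max}(P)\|\mathbf{x}\|^2$ to land exactly on the bound (\ref{in:GUAS}) with $\beta(x,t)=ce^{-st}x$---is precisely the missing quantitative step, and the caveat you flag is the right one: the GUES claim (and in particular its global reach) is only true if ``quadratic Lyapunov function'' is read in the strong sense that $\dot v$ is dominated by a negative definite quadratic form on all of $\mathbb{R}^d$; under the weak reading $\dot v<0$ one gets only asymptotic, not exponential, uniform stability. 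You are also right that the undirected-graph restriction of the theorem is tacitly inherited by the corollary, since symmetry of $L^{\pm}$ is what makes negative semidefiniteness (rather than mere left-half-plane spectrum) available.
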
\vspace{5pt} 

\paragraph{Asymptotic Stability}

In the previous section, we gave a sufficient condition for the switched network system to be \emph{exponentially stable} under the assumption that the node dynamics has a \emph{quadratic Lyapunov function}.  In this section, we aim to \emph{relax} the quadratic Lyapunov function requirement. Consequently, we  expect the conclusion to be weaker than exponential stability. In fact, we show that if the node dynamics has a certain \emph{quadratic-type Lyapunov function} then the resulting switched network system is \emph{asymptotically stable}. Let's start with some definitions

\begin{definition}
    A function $\gamma: [0,a) \mapsto \mathbb{R}_+$ is said to be of class $\mathcal{K}$ if it's continuous, $\alpha(0)=0$ and  strictly increasing. It's said to be of class $\mathcal{K}_{\infty}$ if $a = \infty$  and in addition, it's of class $\mathcal{K}$ and radially unbounded, namely $\gamma(r) \rightarrow \infty$ as $r \rightarrow \infty$.
\end{definition}

\begin{definition}
    Let $x=0$ be an equilibrium point for the nonlinear system $\dot{x} = f(x)$. A Lyapunov function $V$ is said to be of quadratic-type for the system if there exist class $\mathcal{K}$ functions $\lambda_1,\lambda_2$ and $\gamma$ defined on $[0,a)$ and positive constants $c_1$ and $c_2$ such that $\forall~ x\in D = B_{\epsilon}(0) \subset \mathbb{R}^d$ with $\epsilon < a$, we have
    \begin{align}\label{control::eq:quadratic-type-LF}
        \begin{split}
            \lambda_1(\norm{x}) \le V(x) \le \lambda_2(\norm{x})\\
            \frac{\partial V}{\partial x} f(x) \le -c_1\gamma^2(\norm{x}) \\
            \norm{\frac{\partial V}{\partial x}} \le c_2\gamma(\norm{x})
        \end{split}
    \end{align}
\end{definition}

The following proposition gives a sufficient condition  for the existence of a common Lyapunov function ensuring \emph{asymptotic stability} for the switched network system.

\begin{theorem}\label{control::thm:Quad-type-common-Lyapunov}
    Consider the switched system (\ref{eq:Switched-Nwk-syst}), and suppose the node dynamics has a quadratic-type Lyapunov function  such that $\gamma(r) \le \delta r$ on the interval $[0,a)$ for some constants $a,\delta >0$. Let $k_{in}^{\sigma,i}$ and $k_{out}^{\sigma,i}$ denote the in-degree and out-degree respectively of node $i$  in the underlying graph $\sigma(t) \in \mathcal{G}$. If  the coupling Laplacian $L_{\alpha\beta}^{\sigma}$ satisfies $\alpha <0$ and $2\abs{\alpha}k_{in}^{\sigma,i} \ge \abs{\beta}(k_{in}^{\sigma,i}+k_{out}^{\sigma,i})~ \forall t\ge 0$ and $\forall~ i\in \{1,\cdots,N\}$, then  the switched network system possesses a common  Lyapunov function that guarantees asymptotic stability. Furthermore, if the function $\lambda_1$ in (\ref{control::eq:quadratic-type-LF}) is of class $\mathcal{K}_{\infty}$ and $a=\infty$ then the result holds globally.
\end{theorem}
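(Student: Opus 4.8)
The plan is to build a single, mode-independent Lyapunov function by summing the node-level Lyapunov function over the network. Concretely, I take $\mathcal{V}(\mathbf{x}) = \sum_{i=1}^N V(x_i)$, where $V$ is the quadratic-type Lyapunov function of the node dynamics. Because this candidate never references the active graph $\sigma(t)$, if I can show its Lie derivative is negative definite along every mode simultaneously, then $\mathcal{V}$ is a \emph{common} Lyapunov function and the origin is uniformly asymptotically stable under arbitrary switching. The sandwich $\lambda_1(\norm{x})\le V(x)\le\lambda_2(\norm{x})$ immediately yields $\sum_i\lambda_1(\norm{x_i})\le\mathcal{V}(\mathbf{x})\le\sum_i\lambda_2(\norm{x_i})$, so $\mathcal{V}$ is positive definite and decrescent; when $\lambda_1\in\mathcal{K}_\infty$ and $a=\infty$ the lower bound is radially unbounded, which is exactly what promotes the conclusion to the global statement.

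Next I would compute the Lie derivative along a fixed mode $\sigma$. Writing $g_i=\frac{\partial V}{\partial x}(x_i)$ and using the row-$i$ action of the coupling Laplacian $L_{\alpha\beta}^{\sigma}=\alpha D^{in}+\beta\mathbf{A}$ (taking $M=I_d$ as in \cref{control::thm:Quadratic-Lyapunov-Function__undirect_graphs}), I obtain
\begin{align*}
\dot{\mathcal{V}}(\mathbf{x}) = \sum_{i=1}^N g_i\, f(x_i) + \alpha\sum_{i=1}^N k_{in}^{\sigma,i}\,(g_i\cdot x_i) + \beta\sum_{i,j=1}^N \mathbf{A}_{ij}\,(g_i\cdot x_j).
\end{align*}
The node part is controlled by the dissipation hypothesis, $\sum_i g_i f(x_i)\le -c_1\sum_i\gamma^2(\norm{x_i})$, so the whole problem reduces to dominating the diagonal (self-coupling) sum and the cross (adjacency) sum by this quadratic-in-$\gamma$ dissipation.

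For the coupling sums I would homogenize the estimate using the two structural bounds $\norm{g_i}\le c_2\gamma(\norm{x_i})$ and the linear comparison $\gamma(r)\le\delta r$, which together let me express both sums as a quadratic form in the nonnegative scalars $\norm{x_i}$ and compare it against $c_1\sum_i\gamma^2(\norm{x_i})$. A symmetric application of Young's inequality to each cross term $\mathbf{A}_{ij}(g_i\cdot x_j)$ then splits the weight of every directed edge between its head (contributing to the \emph{in}-degree $k_{in}^{\sigma,i}$) and its tail (contributing to the \emph{out}-degree $k_{out}^{\sigma,j}$), producing a diagonal form whose $i$-th coefficient is governed by $2\abs{\alpha}k_{in}^{\sigma,i}-\abs{\beta}(k_{in}^{\sigma,i}+k_{out}^{\sigma,i})$. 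This is precisely where the standing hypotheses enter: $\alpha<0$ keeps the self-coupling term aligned with the dissipation, and $2\abs{\alpha}k_{in}^{\sigma,i}\ge\abs{\beta}(k_{in}^{\sigma,i}+k_{out}^{\sigma,i})$ is a Gershgorin-type diagonal-dominance condition forcing each coefficient to be nonpositive, so that $\dot{\mathcal{V}}(\mathbf{x})\le -c\sum_i\gamma^2(\norm{x_i})$ for some $c>0$. Since $\gamma$ is of class $\mathcal{K}$, the right-hand side is strictly negative whenever $\mathbf{x}\ne 0$.

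The main obstacle is this coupling estimate: getting the accounting of the cross terms right so that the \emph{in}/\emph{out}-degree split lands exactly on the combination $k_{in}+k_{out}$ that appears in the hypothesis, while keeping the stabilizing sign of the diagonal term working with, rather than against, the dissipation. The directed (non-symmetric $\mathbf{A}$) case is what makes this delicate, since the bookkeeping must keep the two endpoints of each edge separate; the undirected setting of \cref{control::thm:Quadratic-Lyapunov-Function__undirect_graphs} is the degenerate case $k_{in}^{\sigma,i}=k_{out}^{\sigma,i}$. Once $\dot{\mathcal{V}}\le -c\sum_i\gamma^2(\norm{x_i})<0$ holds with a bound that is uniform over the finite index set $\mathcal{G}$, I conclude that $\mathcal{V}$ is a common Lyapunov function, and the standard common-Lyapunov theory for switched systems delivers uniform asymptotic stability of the origin under arbitrary switching, local in general and global when $\lambda_1\in\mathcal{K}_\infty$ and $a=\infty$.
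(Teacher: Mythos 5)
Your proposal is correct and follows essentially the same route as the paper: the same composite candidate $\mathcal{V}(\mathbf{x})=\sum_{i=1}^{N}V(x_i)$, the same use of the quadratic-type bounds and the sublinearity $\gamma(r)\le\delta r$ to reduce $\dot{\mathcal{V}}$ to a quadratic form in $\left[\norm{x_1},\cdots,\norm{x_N}\right]^T$, and the same appeal to finiteness of $\mathcal{G}$ and to $\lambda_1\in\mathcal{K}_\infty$ for uniformity and globality. Your Young's-inequality splitting of each directed edge between head and tail is just the hand-carried version of the paper's Gershgorin diagonal-dominance argument applied to $L_{\alpha\beta}^{\sigma}+\bigl(L_{\alpha\beta}^{\sigma}\bigr)^T$, and it lands on exactly the same condition $2\abs{\alpha}k_{in}^{\sigma,i}\ge\abs{\beta}\bigl(k_{in}^{\sigma,i}+k_{out}^{\sigma,i}\bigr)$.
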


\begin{proof}
    define $\mathcal{V}(\mathbf{x}):= \sum_{i=1}^{N}{V(x_i)}$ for $\mathbf{x} \in  D^N$, where $D$ is the domain in  $\mathbb{R}^d$ containing the origin and supporting $V$. The estimate $\mathcal{V}(\mathbf{x}) \ge \lambda_1\big(\max\{\norm{x_i},~1\le i \le N\}\big) = \lambda_1(\norm{\mathbf{x}})$ ensures positive definiteness of $\mathcal{V}$ since the function $\lambda_1$ is of class $\mathcal{K}$. The Lie derivative of $\mathcal{V}$ along  (\ref{eq:Switched-Nwk-syst}) is given by the expression:  $\dot{\mathcal{V}}(\mathbf{x}) = \sum_{i=1}^{N}\dot{V}(x_i)+\sum_{i,j=1}^{N}{\frac{\partial V}{\partial x_i}L_{\alpha\beta}^{\sigma,ij}Mx_j}$, where $L_{\alpha\beta}^{\sigma,ij}$ is the $ij$-entry of the coupling Laplacian $L_{\alpha\beta}^{\sigma}$ associated with the $\sigma$-subsystem. Taking into account (\ref{control::eq:quadratic-type-LF}) and the fact that the matrix $M$ is diagonal with boolean entries, we have the following estimate on the Lie derivative:  $\dot{\mathcal{V}}(\mathbf{x}) \le -\sum_{i=1}^{N}{c_1\gamma^2(\norm{x_i})} + \sum_{i,j=1}^{N}{c_2\abs{L_{\alpha\beta}^{\sigma,ij}}\gamma(\norm{x_i}) \norm{x_j} } $. Using the sublinearity of $\gamma$ on the interval $[0,a)$, we obtain the following quadratic upper-bound on the Lie derivative: $\dot{\mathcal{V}}(\mathbf{x}) \le \sum_{i=1}^{N}{\big( \sum_{j=1}^{N}{c_2\abs{L_{\alpha\beta}^{\sigma,ij}} \norm{x_i}\norm{x_j}} 
 -c_1\norm{x_i}^2\big)}$. Define the matrix $B_{\sigma}:= c_1I_N - c_2 L_{\alpha\beta}^{\sigma}$ where $I_N$ is the identity matrix of size $N$. We get the following estimate: $\dot{\mathcal{V}}(\mathbf{x}) \le -\frac{1}{2} \tilde{\mathbf{x}}^T\big( B_{\sigma}+B_{\sigma}^T \big)\tilde{\mathbf{x}}$, where $\tilde{\mathbf{x}} = [\norm{x_1},\cdots,\norm{x_N}]^T$. Since the index set $\mathcal{G}$ is compact, it suffices to establish positive definiteness of the matrix $B_{\sigma}+B_{\sigma}^T$. This in turn follows from establishing negative semidefiniteness of the matrix $L_{\alpha\beta}^{\sigma}+\big( L_{\alpha\beta}^{\sigma} \big)^T$ which owing to the Gershgorin circle theorem, is guaranteed whenever $\alpha <0$ and $2\abs{\alpha}k_{in}^{\sigma,i} \ge \abs{\beta}(k_{in}^{\sigma,i}+k_{out}^{\sigma,i})$. Furthermore, if the function $\lambda_1$ is of class $\mathcal{K}_{\infty}$, it follows that $\mathcal{V}$ must be radially unbounded, and the result holds globally.
\end{proof}

A few comments about Theorem \ref{control::thm:Quad-type-common-Lyapunov}  are in order. First, the scope of this theorem is wider than that of Theorem \ref{control::thm:Quadratic-Lyapunov-Function__undirect_graphs} since the latter requires the index set $\mathcal{G}$ to contain only \emph{undirected graph}. Next, the \emph{sufficient condition} in Theorem \ref{control::thm:Quad-type-common-Lyapunov}, namely $\alpha <0$ and $2\abs{\alpha}k_{in}^{\sigma,i} \ge \abs{\beta}(k_{in}^{\sigma,i}+k_{out}^{\sigma,i})$ is \emph{stronger} than that of Theorem \ref{control::thm:Quadratic-Lyapunov-Function__undirect_graphs} since the former recovers the latter namely $\alpha < 0 $ and $|\alpha| \ge |\beta|$ when the graphs in $\mathcal{G}$ are all undirected. Finally, if we only care about \emph{(signless) diffusive coupling}, namely if $\abs{\alpha} = \abs{\beta}$, then to ensure \emph{asymptotic stability} of the switched network system, it suffices that  $k_{in}^{\sigma,i} \ge k_{out}^{\sigma,i}$, for all graph $\sigma(t) \in \mathcal{G}$, and for all node $i \in \{1,\cdots,N\}$. This motivates the following proposition.

\begin{proposition}\label{prop:NonPosDivergenceGraphs}
    Assume the node dynamics possesses a quadratic-type Lyapunov function such that $\gamma(r) \le \delta r$ on the interval $[0,a)$ for some positive constants $a,\delta >0$. If $\alpha < 0$ and $k_{in}^{\sigma,i} \ge k_{out}^{\sigma,i}$, $\forall$ graph $\sigma(t) \in \mathcal{G}$,~ and $\forall$ node $i \in \{1,\cdots,N\}$, then the diffusive and  signless-diffusive   coupled switched network system are uniformly asymptotically stable. Furthermore, if the function $\lambda_1$ in (\ref{control::eq:quadratic-type-LF}) is of class $\mathcal{K}_{\infty}$ and $a=\infty$, then the result holds globally. 
\end{proposition}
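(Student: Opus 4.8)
The plan is to derive this proposition as a direct specialization of Theorem \ref{control::thm:Quad-type-common-Lyapunov}. First I would invoke Remark \ref{rmk:diffusive-infusive-coupling} to recall that both diffusive coupling ($\beta = -\alpha$) and signless-diffusive coupling ($\beta = \alpha$) are precisely the two regimes in which $\abs{\alpha} = \abs{\beta}$. Under this constraint, the second sufficient condition of Theorem \ref{control::thm:Quad-type-common-Lyapunov}, namely $2\abs{\alpha}k_{in}^{\sigma,i} \ge \abs{\beta}(k_{in}^{\sigma,i}+k_{out}^{\sigma,i})$, collapses: substituting $\abs{\beta} = \abs{\alpha}$ and dividing through by the nonzero factor $\abs{\alpha}$ (nonzero since $\alpha < 0$) yields $2k_{in}^{\sigma,i} \ge k_{in}^{\sigma,i}+k_{out}^{\sigma,i}$, which is equivalent to the hypothesis $k_{in}^{\sigma,i} \ge k_{out}^{\sigma,i}$.

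With this reduction in hand, the hypotheses of the present proposition (a quadratic-type Lyapunov function with sublinear $\gamma$, together with $\alpha < 0$ and the degree inequality) match exactly the hypotheses of Theorem \ref{control::thm:Quad-type-common-Lyapunov}. I would therefore apply that theorem verbatim to obtain the common Lyapunov function $\mathcal{V}(\mathbf{x}) = \sum_{i=1}^N V(x_i)$ whose Lie derivative is negative definite across every mode, the uniformity over modes being inherited from the compactness of $\mathcal{G}$ exploited in the theorem's proof. The existence of a \emph{single} Lyapunov function valid for every subsystem is precisely the standard certificate for uniform asymptotic stability under arbitrary switching (cf. \cite{liberzon2003switching}); this is what upgrades the bare ``asymptotic stability'' of the theorem to the ``uniform'' conclusion claimed here, since the resulting decay estimate is independent of the switching signal $\sigma$.

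For the global statement I would simply carry over the final clause of Theorem \ref{control::thm:Quad-type-common-Lyapunov}: when $\lambda_1$ is of class $\mathcal{K}_\infty$ and $a=\infty$, the function $\mathcal{V}$ is radially unbounded and the sublinearity of $\gamma$ holds on all of $\mathbb{R}_+$, so the estimates extend to the entire state space. Since this proposition is a genuine corollary, I do not anticipate a substantive obstacle; the only points requiring care are to articulate that uniformity is \emph{inherited} from the switching-independent Lyapunov function rather than re-derived, and to confirm explicitly that the degree inequality $k_{in}^{\sigma,i} \ge k_{out}^{\sigma,i}$ is exactly the image of the theorem's condition under the specialization $\abs{\alpha} = \abs{\beta}$.
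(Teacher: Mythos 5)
Your proposal is correct and follows essentially the same route as the paper: the paper's own proof is the one-line statement that the result ``follows from the proof of Theorem \ref{control::thm:Quad-type-common-Lyapunov},'' with the reduction of $2\abs{\alpha}k_{in}^{\sigma,i} \ge \abs{\beta}(k_{in}^{\sigma,i}+k_{out}^{\sigma,i})$ to $k_{in}^{\sigma,i} \ge k_{out}^{\sigma,i}$ under $\abs{\alpha}=\abs{\beta}$ carried out in the discussion immediately preceding the proposition, exactly as you do. Your explicit remarks on why the common Lyapunov function yields uniformity over switching signals and on the global clause are faithful elaborations of what the paper leaves implicit.
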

\begin{proof}
    It follows from the proof of Theorem \ref{control::thm:Quad-type-common-Lyapunov}.
\end{proof}

 \begin{figure}[]
    \centering
\includegraphics[width=0.80\linewidth]{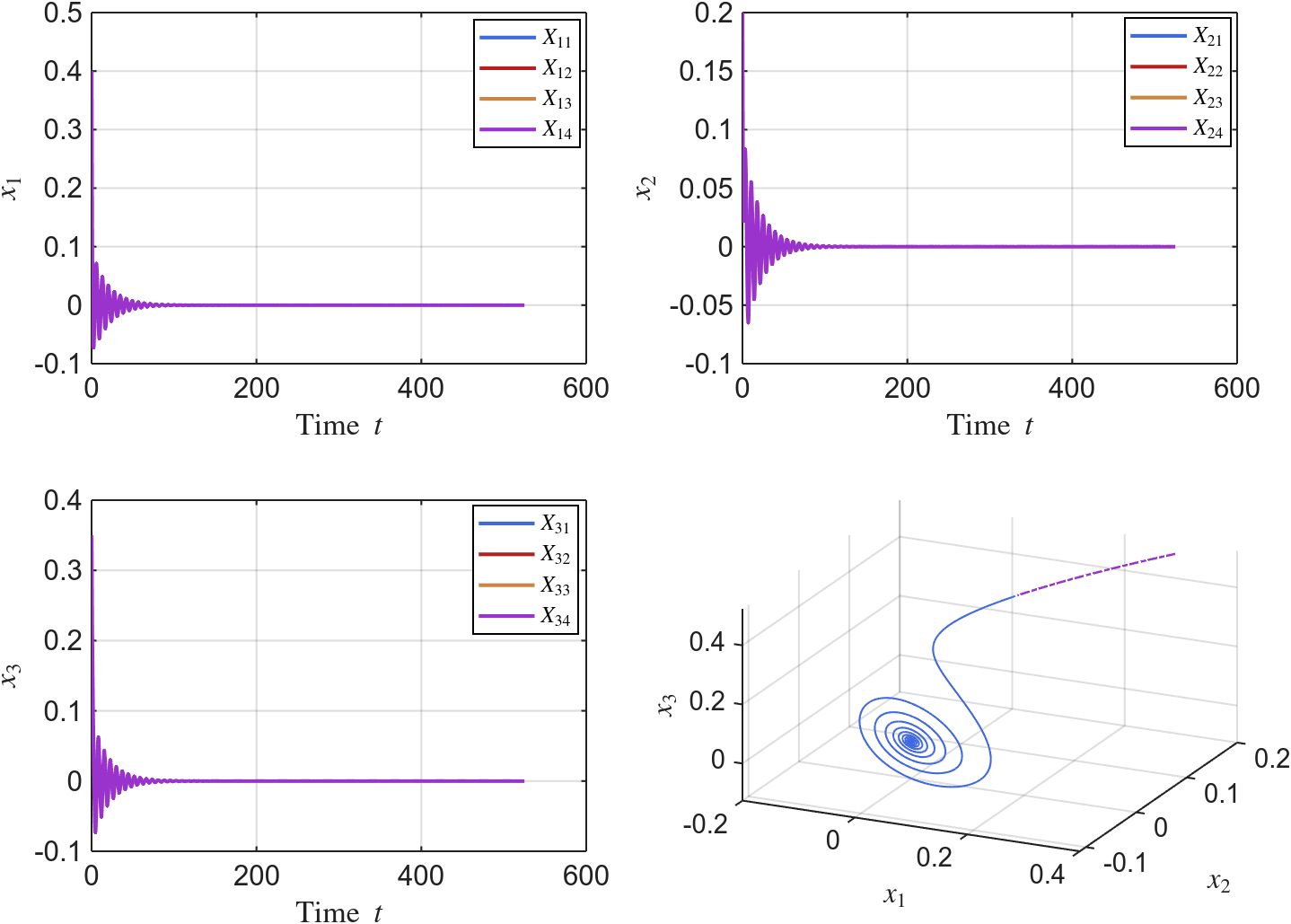} 
    \caption{Four diffusively coupled \emph{stable} Sprott systems \cite{sprott2010elegant} with parameter $\mu = 0.55$ on the bidirectional graph shown in Fig.  \ref{fig:BidirectionGrap-on-4-nodes}.  The coupling strength  is set to $\alpha = \alpha_c - \frac{1}{1000} = 0.0115$, where $\alpha_c = 0.0125$. The resulting network system is \emph{stable} as predicted by Proposition \ref{prop:Stability-Interval_NtwkDyn}. The top left, top right and bottom left plots show trajectories of the 1st, 2nd and 3rd channel respectively of each of the $4$  nodes. The bottom right plot shows the trajectory of the  node dynamics in 3D space. The purple dash-dotted  line shows the beginning segment of the trajectory to give a sense of orientation. The join state space is $12$-dimensional.}
    \label{fig:Stable_Sprott__sub_critical}
 \end{figure} 


\begin{figure}
    \begin{center}
        \includegraphics[width=0.80\linewidth]{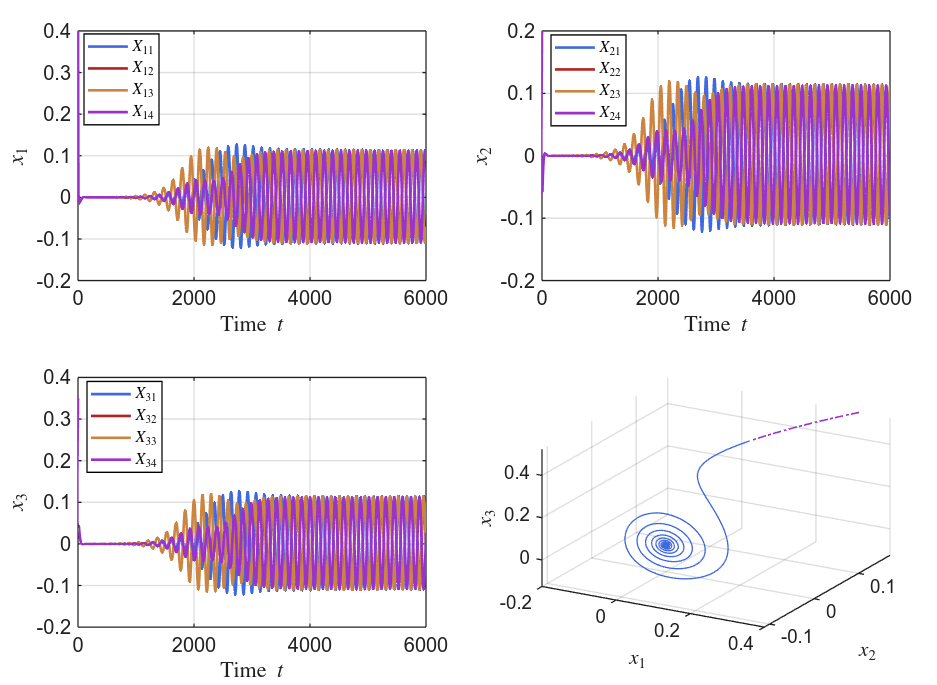} 
    \caption{Same setup as in Fig. \ref{fig:Stable_Sprott__sub_critical}., but here the  coupling strength is set to $\alpha = \alpha_c + \frac{1}{1000} = 0.0135$. The resulting network system is \emph{unstable} as predicted by Proposition \ref{prop:Stability-Interval_NtwkDyn}.}
    \label{fig:Stable_Sprott__super_critical}
    \end{center}
    
 \end{figure}

\tikzstyle{vertex} =[color=blue!50,fill,shape=circle]
\tikzstyle{edge} = [line width=4pt]

\begin{figure}
  \centering
    \resizebox{0.35\linewidth}{!}{
    \begin{tikzpicture}
    \node[vertex, text=white,scale=1] (v1) {1};
    \node[vertex,right of=v1, text=white,scale=1,node distance=100pt] (v2) {2};
    \node[vertex,below of=v2,
     text=white,scale=1, node distance=50pt] (v3) {3};
    \node[vertex,left of=v3, text=white,scale=1,node distance=100pt] (v4) {4};
    
    \begin{scope}[transparency group,opacity=.75]
    \tikzstyle{edge} = [line width=1.5pt]
    
    \draw[edge,style=<->](v1) -- (v2);
    \draw[edge,style=<->](v2) -- (v3);
    \draw[edge,style=<->](v3) -- (v4);
    \draw[edge,style=<->](v4) -- (v1);
    \draw[edge,style=<->](v1) -- (v3);
    \end{scope}
    \end{tikzpicture}
    }
    \caption{Bidirectional graph on a set of four nodes.}
    \label{fig:BidirectionGrap-on-4-nodes}
 \end{figure}
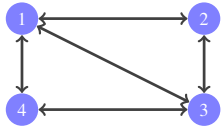

\section{Numerical Simulation}
In this section, we illustrate our theoretical results through numerical simulation.

First, we consider a static network of $4$ Sprott   systems \cite{sprott2010elegant} under stable ($\mu = 0.55$) and unstable ($\mu = 0$) regimes, and coupled over the bidirectional network shown in Fig. \ref{fig:BidirectionGrap-on-4-nodes}. The Sprott system  is an interesting dynamical system whose dynamical and controllability properties have been studied in \cite{mouyebe2024local}, \cite{kvalheim2021families}. It's a tractable version of the represilator \cite{elowitz2000synthetic}, a model of   synthetic genetic regulatory network ubiquitous in biology. In the stable regime,  we compute the critical coupling parameter to be $\alpha_c = 0.0125$, and both Fig. \ref{fig:Stable_Sprott__sub_critical}. and Fig. \ref{fig:Stable_Sprott__super_critical}. confirm that an $\epsilon = \frac{1}{1000}$ deviation away from this critical value can destabilize the network system. This ascertains the validity of  Proposition \ref{prop:Stability-Interval_NtwkDyn}. Likewise in the unstable regime which in fact is chaotic \cite{sprott2010elegant}, we compute $\alpha_c = -0.6545$, and  both Fig. \ref{fig:Unstable_Sprott__sub_critical}. and Fig. \ref{fig:Unstable_Sprott__super_critical}. show that one can indeed stabilize a network of signless-diffusively coupled unstable node dynamics accordingly.

\begin{figure}[]
    \centering
    \includegraphics[width=0.80\linewidth]{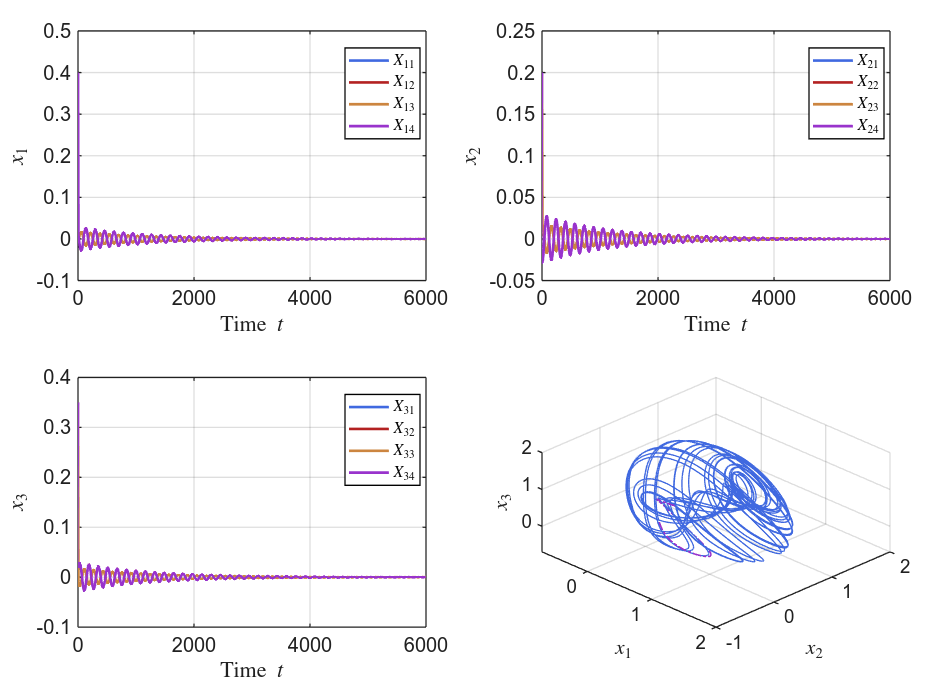} 
    \caption{Four \emph{signless-diffusively}  coupled \emph{unstable} Sprott systems \cite{sprott2010elegant} with parameter $\mu = 0$ on the bidirectional graph shown in Fig.  \ref{fig:BidirectionGrap-on-4-nodes}.  The coupling parameter is set to $\alpha = \alpha_c - \frac{1}{1000} = -0.6555$, where $\alpha_c = -0.6545$. The resulting network system is \emph{stable} as predicted by Proposition \ref{prop:Critical_Stabilizing_coupling_val}. The top left, top right and bottom left plots show trajectories of the 1st, 2nd and 3rd channel respectively of each of the $4$ nodes. The bottom right plot shows the trajectory of the node dynamics in 3D space. The purple dash-dotted line shows the beginning segment of the trajectory to give a sense of orientation. The join state space is $12$-dimensional. Note that the node dynamics in this regime (i.e. frictionless  Sprott system with $\mu = 0$) is in fact \emph{chaotic} \cite{sprott2010elegant}, yet with the \emph{right coupling} we're able to tame it down and effectively \emph{stabilize} the whole network system.}
    \label{fig:Unstable_Sprott__sub_critical}
 \end{figure}

Next, we signless-diffusively couple $4$ copies of the globally asymptotically stable node dynamics $\dot{x} = -x^3$  over what we call \emph{non-positive divergence}  graphs. These are directed graphs for which the out-degree of each node is no greater than its in-degree. The resulting network system is allowed to switch its connectivity topology at random a total of $7$ times during the course of the experiment within the set shown in Fig. \ref{fig:non-positive-Divergence-Graphs}. For coupling parameters $\alpha = -1 = \beta$, the hypothesis of Proposition \ref{prop:NonPosDivergenceGraphs} are satisfied, and Fig. \ref{fig:cubedScalar_a=-1_b=-1__stable}. confirms the theoretical predictions that the switched network system is indeed stable.

\begin{figure}[]
    \centering
    \includegraphics[width=0.80\linewidth]{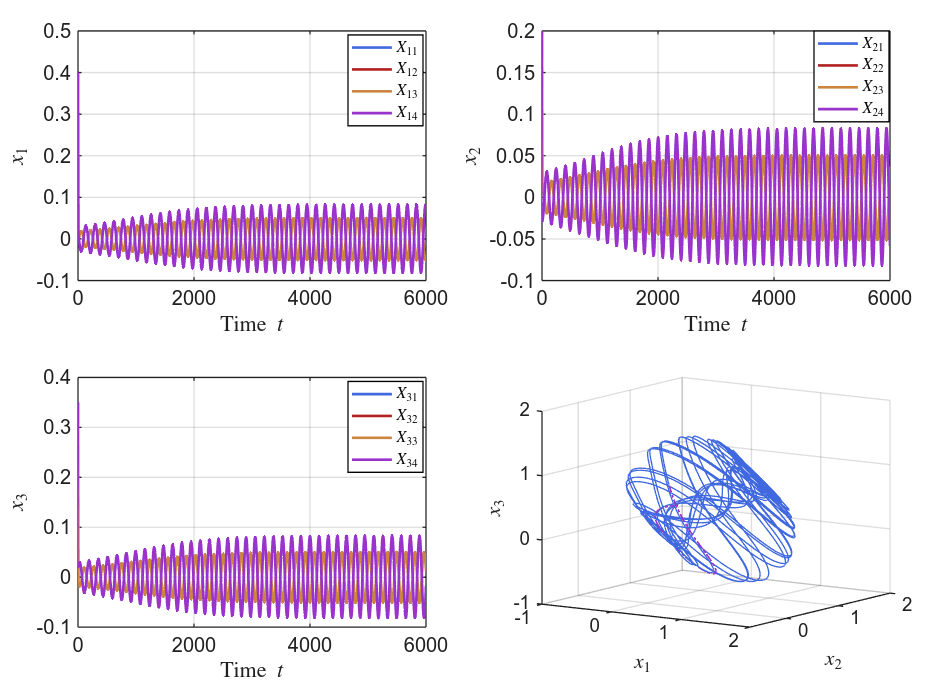} 
    \caption{Same setup as in Fig. \ref{fig:Unstable_Sprott__sub_critical}. Here however, the  coupling parameter is set to $\alpha = \alpha_c + \frac{1}{1000} = -0.6535$. The resulting network system is \emph{unstable} as predicted by Proposition \ref{prop:Critical_Stabilizing_coupling_val}. The trajectories are hinting that the dynamics of each  channel is evolving on a toroidal manifold, so it's either \emph{periodic} or \emph{quasi-periodic}.}
    \label{fig:Unstable_Sprott__super_critical}
 \end{figure}

Finally, we repeated the experiment above, but now changing the  coupling parameters to $\alpha = 1$ and $ \beta = -1$. Under these conditions, the hypotheses of Proposition \ref{prop:NonPosDivergenceGraphs} are not met. Consequently, stability of the switched system is \emph{not guaranteed} since the result only gives sufficient conditions. Nevertheless,  Fig. \ref{fig:cubedScalar_a=1_b=-1__unstable}. shows that the switched network system is unstable.

\addtolength{\textheight}{-.1cm}   

\begin{figure}[]
    \centering
    \includegraphics[width=0.80\linewidth]{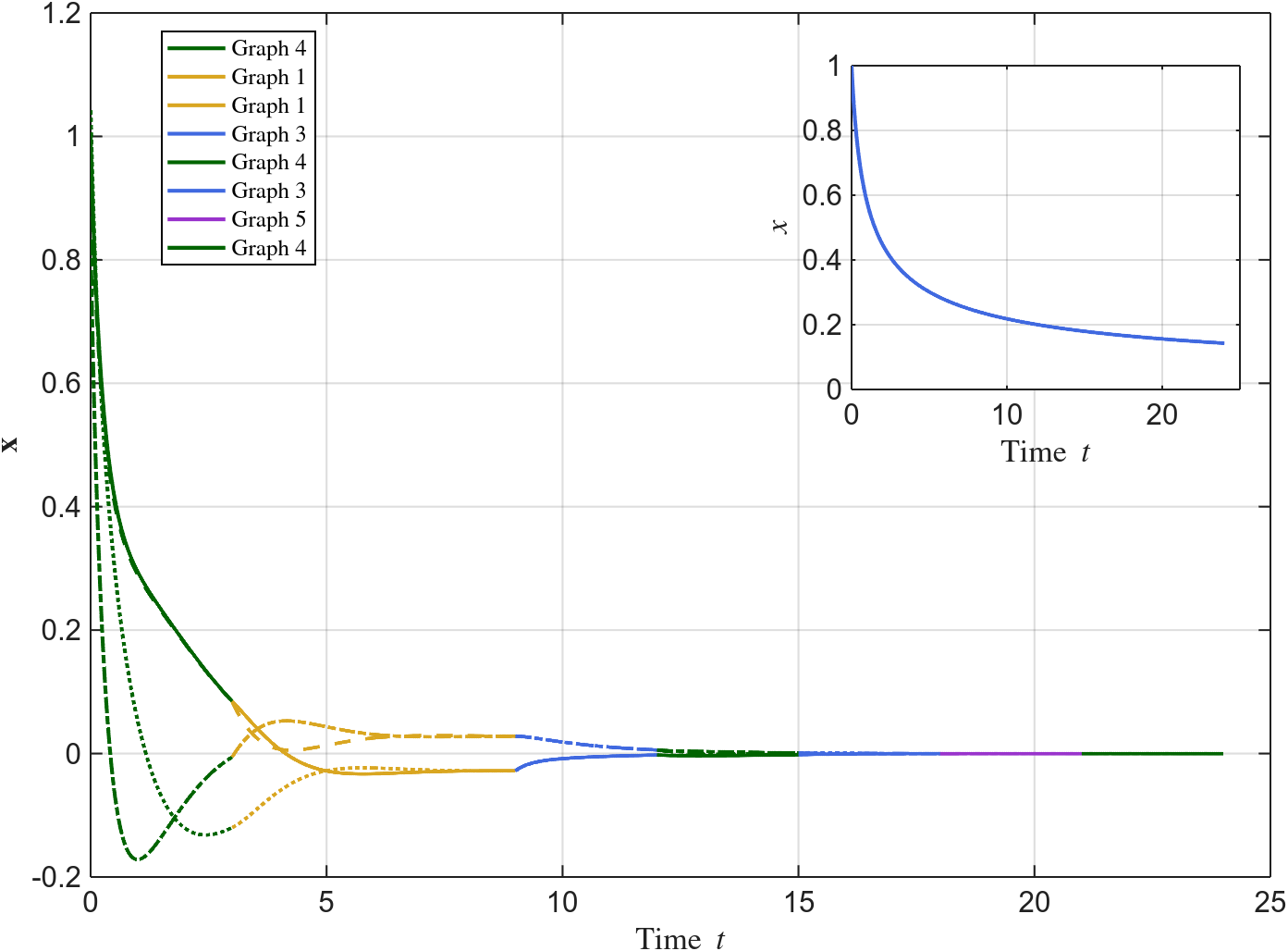} 
    \caption{Switched network system with modes made of scalar node dynamics $\dot{x} = -x^3$ (see inset plot) \emph{signless-diffusively}  coupled over the network topologies shown in Fig. \ref{fig:non-positive-Divergence-Graphs}. $\alpha = -1 = \beta$. Solid lines indicate the dynamics of Node 1, dashed lines that of Node 2, dotted lines indicate Node 3, and finally dash-dotted lines indicate Node 4. The hypothesis of Proposition \ref{prop:NonPosDivergenceGraphs} are met, and the switched network is indeed asymptotically  stable. } 
    \label{fig:cubedScalar_a=-1_b=-1__stable}
 \end{figure}

\begin{figure}[]
    \centering
    \includegraphics[width=0.5\linewidth]{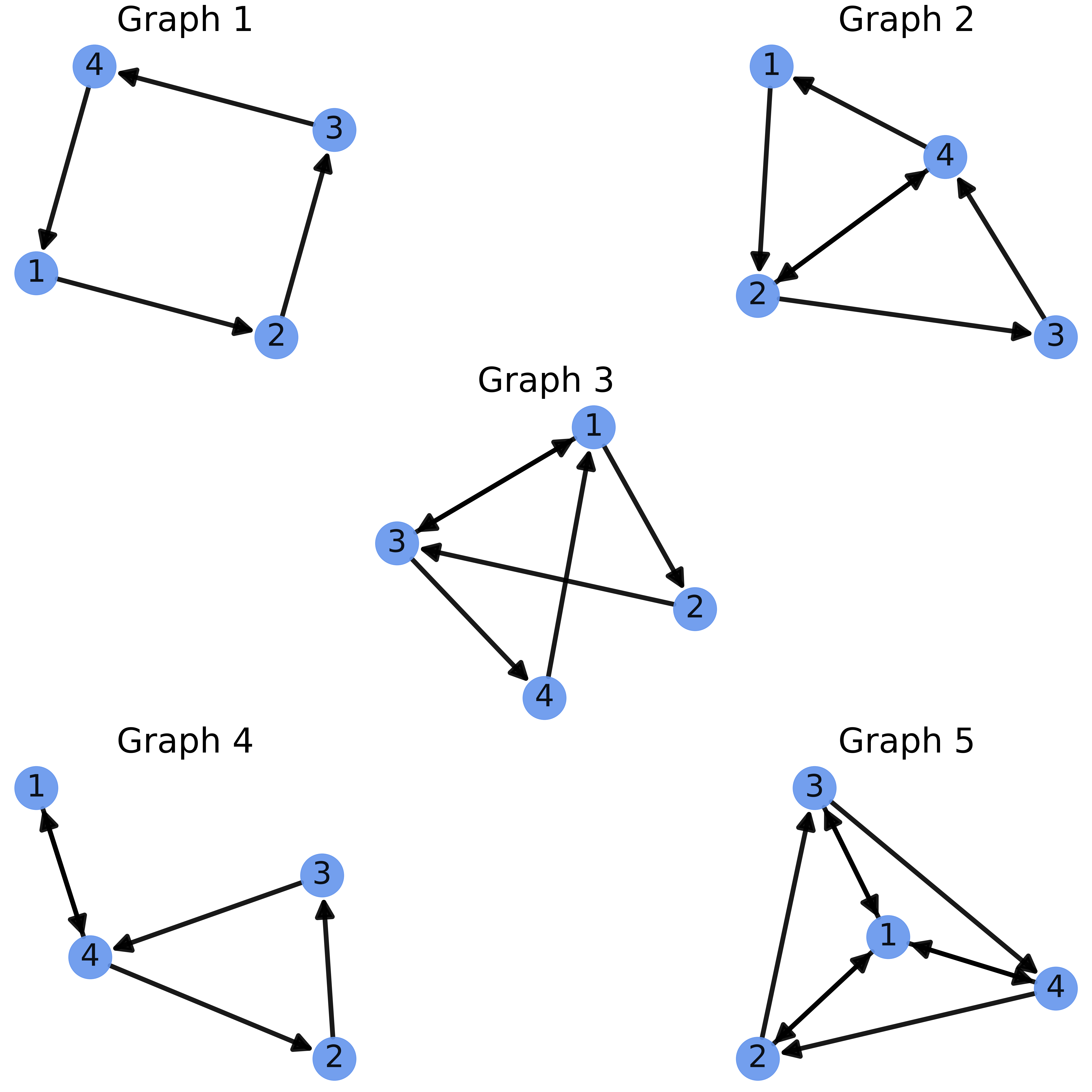} 
   \caption{Randomly selected set of $5$ \emph{non-positive divergence} graphs on $4$ nodes. These are directed graphs for which the out-degree of each node is no greater than the in-degree. This class of graphs is narrower than that of strongly connected graphs, but it's larger than that of undirected graphs. According to Proposition \ref{prop:NonPosDivergenceGraphs},  this is the most conducive class of network topologies for asymptotic stability under arbitrary switching of signless-diffusively coupled network systems.}
    \label{fig:non-positive-Divergence-Graphs}
 \end{figure}

 \begin{figure}[]
    \centering
    \includegraphics[width=0.80\linewidth]{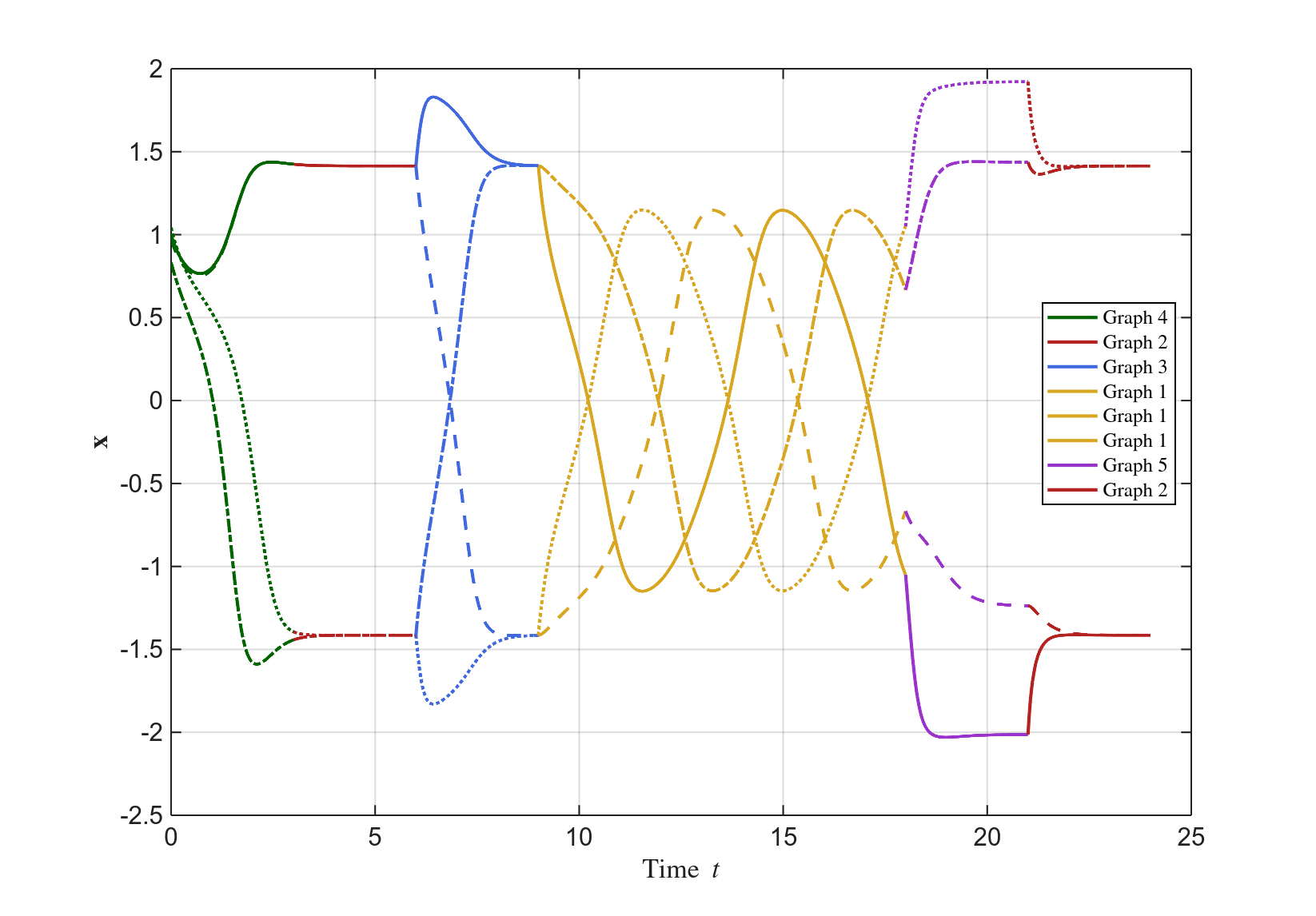} 
    \caption{Switched network system with modes made of scalar node dynamics $\dot{x} = -x^3$  \emph{diffusively} coupled over the network topologies shown in Fig. \ref{fig:non-positive-Divergence-Graphs}. $\alpha = 1$ and   $\beta= -1$. The line style convention is the same as in Fig. \ref{fig:cubedScalar_a=-1_b=-1__stable}. Here, the hypothesis of Proposition \ref{prop:NonPosDivergenceGraphs} are \emph{violated}, so the stability of the resulting switched system under arbitrary switching is \emph{not guaranteed} . Coincidentally, the particular switching signal used in this plot shows that the switched network system is in fact not stable.}
    \label{fig:cubedScalar_a=1_b=-1__unstable}
 \end{figure}

\section{Conclusion}

This paper highlights the critical role of coupling in the stability and stabilization of diffusively coupled network dynamical systems. Using Lyapunov methods, we derive expressions for key coupling parameters to control the network system stability. Furthermore, we derive sufficient conditions for ensuring  asymptotic stability of the network system under arbitrary switching. Our analysis demonstrates that appropriate coupling can synchronize nodes and enhance system stability, even in the presence of complex interactions and dynamic connectivity topologies. Numerical simulations support the findings, thus highlighting the importance of designing coupling strategies to ensure resilience in real-world systems.  A promising area of future research is the extension of our results to hypernetworks, namely  networks in which more than two agents can interact simultaneously.


\end{document}